\documentclass[12pt,letterpaper,reqno]{amsart}
\usepackage{tikz}
\usetikzlibrary{positioning, shapes.geometric, arrows.meta}
\usepackage{amssymb}
\usepackage{amsmath}
\usepackage{amsthm}
\usepackage{amsfonts}
\IfFileExists{bbm.sty}{\usepackage{bbm}}{}
\usepackage{enumitem}
\usepackage{pgfplots}
\pgfplotsset{compat=1.18}
\usepackage{booktabs,tabularx,array}
\usepackage{graphicx}
\usepackage[T1]{fontenc}
\usepackage{doi}
\usepackage{float}

\usepackage[dvipsnames]{xcolor}
\usepackage{bookmark}
\usepackage{hyperref}
\hypersetup{colorlinks=true,
linkcolor=RoyalBlue,
citecolor=ForestGreen!65!white,
urlcolor=BrickRed}
\allowdisplaybreaks

\newtheorem{thm}{Theorem}[section]
\newtheorem{lem}[thm]{Lemma}
\newtheorem{prop}[thm]{Proposition}

\newtheorem{conj}[thm]{Conjecture}
\theoremstyle{definition}

\numberwithin{equation}{section}

\newcommand{\C}{\mathbb C}
\newcommand{\R}{\mathbb R}
\newcommand{\Z}{\mathbb Z}

\newcommand{\dd}{\,d}
\newcommand{\dA}{\,dA}
\newcommand{\db}{\bar\partial}
\newcommand{\p}{\partial}
\newcommand{\calR}{\mathcal R}
\newcommand{\calC}{\mathcal C}
\newcommand{\calU}{\mathcal U}
\DeclareMathOperator{\dist}{dist}
\DeclareMathOperator{\supp}{supp}
\DeclareMathOperator{\interior}{int}
\DeclareMathOperator{\Rea}{Re}
\newcommand{\norm}[1]{\left\lVert#1\right\rVert}
\setlist[enumerate]{label=\textup{(\roman*)},
leftmargin=*,itemsep=3pt,topsep=4pt}
\makeatother

\begin{document}

\title[A counterexample to Fuchs's conjecture]
{A counterexample to Fuchs's conjecture}

\author[A.~Eremenko]{Alexandre Eremenko}

\address{Mathematics Department, Purdue University,
	West Lafayette, IN 47907, USA}
\email{eremenko@purdue.edu}

\author[T.~Zhang]{Teng Zhang}
\address{School of Mathematics and Statistics, Xi'an Jiaotong University,
Xi'an 710049, P. R. China}
\email{teng.zhang@stu.xjtu.edu.cn}
\subjclass[2020]{Primary 30D35; Secondary 30C85, 31A05, 32W05, 46J10}
\keywords{Fuchs's conjecture; logarithmic derivative;
Nevanlinna deficiency; analytic capacity;
positive representing measure;
subharmonic function; weighted $\bar\partial$ estimate}

\begin{abstract}
For every $\rho\in(0,1/2)$, we construct an entire function $F$
of order and lower order $\rho$ whose logarithmic derivative
has zero as a deficient value, that is,
$\delta(0,F'/F)>0$.
This disproves an old conjecture of W.~H.~J.~Fuchs.
\end{abstract}

\maketitle

\tableofcontents

\section{Introduction}\label{sec:intro}
We use the standard notation
of Nevanlinna theory; see, for example, \cite{Hay64}.
For a meromorphic function $g$, we write $\rho(g)$ and $\lambda(g)$
for its order and lower order, respectively. For an entire function $g$,
$M(r,g)=\max_{|z|=r}|g(z)|$.

In \cite[Problem~22]{Ehr68}
Wolfgang Fuchs conjectured that for transcendental meromorphic functions
$f$ of order less than $1$, we have $\delta(0,f'/f)=0$.
Goldberg and Korenkov \cite{GK80} constructed counterexamples
among meromorphic functions of every prescribed order
\(0\leq\rho<1\), and among entire functions of every prescribed order
\(1/2<\rho<1\). So we consider the following formulation of Fuchs's
conjecture in the range \(\rho<1/2\).

\begin{conj}[Fuchs]\label{conj:Fuchs}
Let $F$ be a transcendental entire function satisfying $\rho(F)<1/2$. Then $
  \delta\!\left(0,{F'}/{F}\right)=0.$
\end{conj}

Goldberg and Korenkov proved Conjecture~\ref{conj:Fuchs}
for functions of completely regular growth in the
sense of Levin and Pfluger, and for functions of zero order satisfying
some
regularity conditions.

Eremenko, Langley, and Rossi
\cite[Corollary~1.10]{ELR94}
proved that, for an entire function \(F\) of order \(\rho(F)<1/2\)
and lower order \(\lambda(F)\),
\begin{equation}\label{elr}
\delta(0,F'/F)\leq 1-\cos\pi\lambda(F),
\end{equation}
which in particular proves Conjecture~\ref{conj:Fuchs} when
\(\lambda(F)=0\).

Miles and Rossi \cite{MR01} obtained a further upper bound for
$\delta(0,F'/F)$ when the order $\rho$ is sufficiently small and
positive. Their estimate improves \eqref{elr} 
(with $\rho$ instead of $\lambda$) for sufficiently small positive $\rho$.
Langley and Rossi \cite[Corollary~1.1]{LR04} proved the
conjecture for the class of transcendental entire functions
of order at most one, convergence class, whose zeros accumulate
to a ray.

Notice that, for a nonconstant entire function \(F\) of order less than \(1\)
with \(F(0)\neq0\),
\[
\mathcal L(z):=\frac{F'(z)}{F(z)}
=\sum_j\frac{m_j}{z-\zeta_j},
\]
where \(\zeta_j\) are the distinct zeros of \(F\) and \(m_j\) are
their multiplicities. Since \(F\) has order less than \(1\),
\[
\sum_j\frac{m_j}{|\zeta_j|}<\infty,
\]
and the series converges locally uniformly away from its poles.
Up to complex conjugation and a nonzero real factor, this series
represents the gravitational field generated by point masses $m_j$
at $\zeta_j$ in two dimensions.\footnote{An analogous
three-dimensional model consists of parallel, uniformly charged
wires perpendicular to the plane, with charge densities proportional
to $m_j$.}
Lee Rubel \cite[Problem~7.78]{HL19} asked whether a series of this
form, with infinitely many poles and all residues
equal to $1$, must have zeros (equilibrium points of the force).
For positive integer residues, including the case of unit residues,
Clunie, Eremenko, and Rossi \cite[Theorem~2.1]{CER93}
proved that such a function has infinitely many zeros.
Related questions with more general masses and in $\R^n$ are studied
in \cite{CER93,ELR94,LR04}.

The property $\delta(0,\mathcal L)>0$ is naturally expressed in terms
of integrated counting functions.
Let $N(r,\mathcal L)$ and $N(r,0,\mathcal L)$ denote the Nevanlinna counting
functions of the poles and zeros of $\mathcal L$, respectively.
Since $\sum_j m_j/|\zeta_j|<\infty$, the theorem of M.~V.~Keldysh
stated in \cite[Ch. V, Thm. 6.1]{GO08}
gives
\[
m(r,\mathcal L)=o(1),\qquad T(r,\mathcal L)=N(r,\mathcal L)+o(1).
\]
Consequently,
\[
\delta(0,\mathcal L)
=
1-\limsup_{r\to\infty}
\frac{N(r,0,\mathcal L)}{N(r,\mathcal L)}.
\]
Thus $\delta(0,\mathcal L)>0$ means that there exists $\eta>0$ such that
\[
N(r,0,\mathcal L)\leq(1-\eta)N(r,\mathcal L)
\]
for all sufficiently large $r$.

As mentioned in \cite{Ere09}, the construction of a counterexample
to Conjecture~\ref{conj:Fuchs} essentially boils down to constructing
a nonconstant subharmonic function of order $\rho<1/2$ which is
locally constant on an open set intersecting every circle $|z|=r$
for all sufficiently large $r$.

In this paper, we establish the following theorem,
which disproves Conjecture~\ref{conj:Fuchs}.

\begin{thm}\label{thm:main}
For each $0<\rho<1/2$, there exist a transcendental entire
function $F$ and constants $a,A,B>0$ such that,
for every sufficiently large $r$,
\begin{equation}\label{eq:main-growth}
 a r^\rho\le \log M(r,F)\le A r^\rho,
 \qquad
 m\!\left(r,\frac{F}{F'}\right)\ge B r^\rho.
\end{equation}
Consequently, $F$ has order and lower order $\rho$, and
$\delta\!\left(0, {F'}/F\right)>0.$
\end{thm}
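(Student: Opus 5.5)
We will follow the strategy indicated in \cite{Ere09}: first build a subharmonic model, then approximate it by $\log|F|$ for an entire $F$, and finally check that $|F'/F|$ is exponentially small on a large part of every circle. The $\bar\partial$ keywords suggest the approximation step should be done with weighted $L^2$ estimates.

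\textbf{Step 1: the subharmonic model.} We construct a subharmonic function $u$ on $\C$ with $u(z)\asymp |z|^\rho$ and Riesz measure $\mu$ of growth $\mu(D(0,r))\asymp r^\rho$. In addition we require an open set $U$ on which $u$ is locally constant, with $U$ meeting every circle $|z|=r$ ($r$ large) in arcs of total angular length at least some fixed $c>0$.

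Since $\rho<1/2$, $u$ cannot be locally constant on a full sector. The set $U$ will therefore be a union of bounded components: "lakes" $U_k$ placed in dyadic annuli $\{R_k\le|z|\le 2R_k\}$ and arranged in overlapping layers so that every circle is covered. On each lake $u\equiv c_k$ with $c_k\asymp R_k^\rho$.

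The mechanism is potential-theoretic. We take $u=\max(v,\cdot)$-type modifications or balayage: starting from $v(z)=\Rea$-type $|z|^\rho$ models, we sweep the mass of $\mu$ out of $U_k$ onto $\partial U_k$. We need the swept function to stay $\le$ the constant on the lake and to change the growth only by a bounded factor. The analytic-capacity and representing-measure keywords indicate the tool: we must find positive measures on $\partial U_k$ whose logarithmic potential is constant on $U_k$ while matching $v$ away from it. We will choose $U_k$ as thin slits thickened slightly, so that the harmonic-measure distortion is controlled. We must check that the total mass added stays $O(r^\rho)$, which gives $\log M\asymp r^\rho$.

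\textbf{Step 2: atomization.} We replace $\mu$ by integer point masses $\nu=\sum\delta_{\zeta_j}$ placed on $\partial U$ at unit spacing in mass, and set $F(z)=\prod(1-z/\zeta_j)$. The standard discretization estimate (Yulmukhametov type) gives $|\log|F|-u|=O(\log r)$ outside small exceptional disks. This yields the first inequality of \eqref{eq:main-growth}.

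\textbf{Step 3: the logarithmic derivative on lakes.} On $U_k$ we have $F'/F=2\partial(\log|F|)$, and $u$ is constant there. We must show $|F'/F|\le e^{-Br^\rho}$ on a subarc of $U\cap\{|z|=r\}$ of length $\gtrsim r$. Then $m(r,F/F')\ge \frac{c}{2\pi}Br^\rho$.

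Plain atomization only gives $|F'/F|=O(1/r)$, which is far too weak. So we instead correct $F$ by a $\bar\partial$ argument. Let $\chi$ be a cutoff equal to $1$ on the interior of $U$. We solve $\bar\partial h=g$ with the Hörmander weight $e^{-2u}$ (suitably perturbed by $\log(1+|z|^2)$ terms). Here $g$ is the error between $F$ and a locally holomorphic function $G_k$ on $U_k$ with $|G_k|=e^{c_k}$ and $G_k'$ tiny, for instance $G_k$ constant. Gluing $G_k$ with the product outside $U$ and solving $\bar\partial$ produces an entire function $\tilde F$. This $\tilde F$ is within $e^{u-\epsilon r^\rho}$-size error of the constant $e^{c_k}$ on a shrunken lake. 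Cauchy estimates then give $|\tilde F'|\le e^{c_k-\epsilon r^\rho}$ while $|\tilde F|\approx e^{c_k}$, hence the bound above.

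For this to work, the weight must be strictly subharmonic with a margin of order $r^{\rho-2}$ near $\operatorname{supp}\bar\partial\chi$. We will arrange this by using the weight $2u-\epsilon w$, where $w$ is a fixed subharmonic function of order $\rho$ that is strictly subharmonic in a neighborhood of $\partial U$ and small inside $U$.

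\textbf{Main obstacle.} The main obstacle is Step 1 with the geometric constraints needed in Step 3. We need lakes that cover every circle in angular measure bounded below, while the balayage keeps order $\rho<1/2$ and leaves a margin $\epsilon r^\rho$ between the constant $c_k$ and the weight near the lake boundaries. The first attempt will be a self-similar arrangement: lakes are thin neighborhoods of radial segments $[R_k e^{i\theta_k},2R_k e^{i\theta_k}]$ with rotating angles $\theta_k$. We will verify positivity of the swept measures via representing-measure and analytic-capacity estimates on slit domains.
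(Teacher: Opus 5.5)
Your proposal has a genuine gap, and it is the step you yourself call the main obstacle. Step~1 is never carried out, and it is essentially the whole problem: the paper, following \cite{Ere09}, reduces the counterexample to exactly this step. The mechanisms you name do not produce it.

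\begin{itemize}
\item Sweeping $\mu$ out of a lake onto its boundary makes the potential harmonic there, not constant. Constancy on $U_k$ means $\int d\mu(\zeta)/(z-\zeta)=0$ for $z\in U_k$. This is a global cancellation: the mass at all smaller scales (seen from $U_k$ as a point mass $\asymp R_k^\rho$ at $0$) and at all larger scales must be balanced by \emph{positive} mass.
\item Taking $\max(v,c)$ flattens only a sublevel set of a single level. For $v$ of order less than $1/2$, the $\cos\pi\rho$ theorem shows such a set misses circles of arbitrarily large radius.
\item Thickened slits are the wrong geometry for the representing-measure mechanism you allude to, i.e.\ a positive measure on the boundary of the inverted perforated set representing $h(q)-2^{-\rho}h(p)$, followed by dyadic telescoping. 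After inversion, each lake contains a continuum spanning a dyadic annulus, whose analytic capacity is at least a quarter of its diameter. So, by Runge, bounded rational functions on the perforated set separate $-1$ and $-1/2$ by a fixed amount $d_0>0$. A Cayley transform then produces $h$ with $\Rea h\ge0$ and $\Rea h(q)<2^{-\rho}\Rea h(p)$ once $2^{-\rho}$ is close enough to $1$. Hence no such positive measure exists for small $\rho$.
\end{itemize}

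The missing idea is the paper's. Choose a compact set $E$ of \emph{zero} analytic capacity (an exponential image of the four-corner Cantor set) whose radial projection $[1,e^\ell]$, with $\ell>\log2$, is longer than a dyadic interval, while the dilates $2^jE$ stay disjoint. Perforate a large disk by small disks approximating all the $2^jE$. Montel and Liouville on $\C\setminus Z$ then show that the separation $d_N$ of $p,q$ tends to $0$. The Cayley transform and Hahn--Banach now give a positive $\sigma$ with $\int h\,d\sigma=h(q)-2^{-\rho}h(p)$. After inversion, $\sum_n2^{\rho(n+1)}(2^n)_*\nu$ has a Cauchy transform that telescopes to $0$ on every disk.

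Step~3 also has a smaller gap. Subtracting $\epsilon w$ with $w$ strictly subharmonic near $\partial U$ makes the weight superharmonic inside the lake, where $\mu=0$. The paper instead gets the margin by adding Green functions $\sum\gamma_{j,k}G_{j,k}$ of the disks. This preserves subharmonicity only because $\mu$ dominates a scaled multiple of arclength on every lake boundary, \eqref{eq:circle-density}. That property must be built in separately (Lemma~\ref{lem:boundary-reservoir}), since a positive representing measure need not charge any given circle.

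Finally, the atomization in Step~2 is unnecessary. Starting from $F_0=\sum A_{j,k}\chi_{j,k}$, the weighted $L^2$ bound gives $\log M(r,F)\le Ar^\rho$. The lower growth bound then follows from $m(r,F/F')\ge Br^\rho$ via Lemma~\ref{lem:characteristic}.
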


\noindent\textbf{Sketch of the construction.}
In Sections~\ref{sec:prelim}--
\ref{sec:measures} we construct a perforated compact set $K$ and a positive
measure $\sigma$ with the point-evaluation identity needed for the
self-similar construction. In Section~\ref{sec:potential},
Proposition~\ref{prop:power-potential} we produce a nonconstant subharmonic
function $U$ such that
\[
 U(0)=0,
 \qquad
 U(2z)=2^\rho U(z),
\]
and $U$ is locally constant on a union of disjoint disks invariant under the  map $z\mapsto 2z$.
Lemma~\ref{lem:uniform-geometry} shows that a slightly smaller family of
these disks is an open set meeting every circle centered at the origin.
Together with the boundary-density estimate
\eqref{eq:circle-density}, these properties are precisely what we need in the final step.
In Section~\ref{sec:from-subharmonic} we convert this subharmonic
function into the required entire function and this completes the proof of
Theorem~\ref{thm:main}.

\medskip
\noindent\textbf{Organization of the paper.}
Section~\ref{sec:prelim} contains the conventions and two elementary
lemmas. Section~\ref{sec:geometry} constructs the
removable radial model and its disk approximations.
Section~\ref{sec:measures} constructs the positive boundary measure.
Section~\ref{sec:potential} constructs the self-similar logarithmic
potential and proves the all-radius geometric property. Finally,
Section~\ref{sec:from-subharmonic} completes the passage from the
subharmonic function to an entire counterexample.

\medskip
\noindent\textbf{Acknowledgments and AI tools disclosure.} We thank Mikhail Sodin for helpful
comments.

Teng Zhang is supported by the China Scholarship Council, the Young Elite
Scientists Sponsorship Program for PhD Students (China Association for Science
and Technology), and the Fundamental Research Funds for the Central
Universities at Xi'an Jiaotong University (Grant No.~xzy022024045).

ChatGPT was used for English-language editing, proofreading, and grammatical
correction, and as an exploratory tool for discussing possible approaches to
selected results under the authors’ mathematical supervision and guidance. The
authors take full responsibility for all mathematical arguments and for the
accuracy and correctness of the final manuscript.

\section{Conventions and elementary lemmas}\label{sec:prelim}

We write
\[
 dA=dx\,dy,\qquad
 \Delta=\partial_x^2+\partial_y^2,\qquad
 \p=\tfrac12(\partial_x-i\partial_y),\qquad
 \db=\tfrac12(\partial_x+i\partial_y),
\]
so that
\[
 \p\db=\tfrac14\Delta.
\]
For a subharmonic function $v$, its Riesz measure is
$(2\pi)^{-1}\Delta v$. Subharmonic functions are always understood in
their upper-semicontinuous representatives.
We write $ds$ for Euclidean arclength on the specified curve and
$\delta_a$ for the unit point mass at $a$. For a signed measure
$\lambda$, $|\lambda|$ denotes its total variation. If $P$ is a measurable map
and $\lambda$ is a measure on its domain, then $P_*\lambda$ denotes
its push-forward, defined by
$(P_*\lambda)(A)=\lambda(P^{-1}(A))$ for measurable sets $A$.

For a compact set $K\subset\C$, let $\calR(K)$ denote the set of rational
functions whose finite poles lie outside $K$. We shall work with
$\calR(K)$ itself, rather than with its uniform closure. We write
$\norm{h}_K=\sup_{z\in K}|h(z)|$ for the uniform norm on $K$.
For $a\in\C$ and $s>0$, let
\[
 B(a,s)=\{z\in\C:|z-a|<s\}.
\]
If $D=B(a,s)$ and $0<t<1$, we write
\[
 D(t)=B(a,ts)
\]
for the concentric subdisk. Scalar multiplication of sets always
refers to dilation about the origin.

Unless explicitly fixed, $C>0$ denotes a constant that may change
from one estimate to the next. The constants implicit in $O(\cdot)$
and $\lesssim$ may depend on the fixed order, the finite geometric
model, and auxiliary parameters already fixed, but are independent
of the dilation index. When a measure is initially defined on
$\C\setminus\{0\}$, we regard it as a Radon measure on $\C$ only after
local finiteness at the origin has been verified.


We begin with the elementary facts from Nevanlinna theory that will be used in
the final step of the proof. They also clarify the lower bound in
\eqref{eq:main-growth}.

\begin{lem}\label{lem:characteristic}
For nonzero meromorphic functions $g$ and $h$, and for $r\ge1$,
\begin{equation}\label{eq:characteristic-rules}
 T(r,1/g)=T(r,g)+O(1),\qquad
 T(r,gh)\le T(r,g)+T(r,h).
\end{equation}
If $F$ is a nonconstant entire function, then, for large $r$,
\begin{equation}\label{eq:logder-characteristic}
 T\!\left(r,\frac{F'}F\right)
 \le 2\log M(2r,F)+O(\log r).
\end{equation}
\end{lem}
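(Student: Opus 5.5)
The first identity in \eqref{eq:characteristic-rules} is Nevanlinna's first fundamental theorem applied at the value $0$. The second follows from the subadditivity of $\log^+$ and of the counting function of poles. For the first, write $T(r,1/g)=m(r,0,g)+N(r,0,g)$. The first fundamental theorem (via Jensen's formula) gives $T(r,1/g)=T(r,g)-\log|c_g|$, where $c_g$ is the leading coefficient of the Laurent expansion of $g$ at $0$. This is $T(r,g)+O(1)$.

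For the second, use $\log^+|gh|\le\log^+|g|+\log^+|h|$ pointwise on $|z|=r$ and integrate to get $m(r,gh)\le m(r,g)+m(r,h)$. A pole of $gh$ of order $k$ at $z_0$ forces poles of $g$ and/or $h$ at $z_0$ of total order at least $k$, because zeros can only cancel poles. Hence $n(t,gh)\le n(t,g)+n(t,h)$, and after integration $N(r,gh)\le N(r,g)+N(r,h)$. This includes the terms at the origin under the standard convention for $N$. Adding the two inequalities gives the claim.

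For \eqref{eq:logder-characteristic} I would bound $m$ and $N$ separately for $\mathcal L=F'/F$. The poles of $\mathcal L$ are simple and located at the distinct zeros of $F$. So $N(r,\mathcal L)\le N(r,0,F)+O(\log r)$, where the $O(\log r)$ term absorbs a possible zero or pole at the origin. By Jensen's formula, $N(r,0,F)\le \log M(r,F)+O(1)$, and the same bound holds when $F(0)=0$ after dividing by the appropriate power of $z$, at the cost of $O(\log r)$.

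For the proximity term, the cleanest route avoiding the full lemma on the logarithmic derivative is $m(r,F'/F)\le m(r,F')+m(r,1/F)$. For the first piece, Cauchy's estimate on the circle of radius $2r$ gives $M(r,F')\le M(2r,F)/r$, so $m(r,F')\le \log^+ M(2r,F)$. For the second, the first fundamental theorem gives $m(r,1/F)\le T(r,1/F)=T(r,F)+O(1)=\log^+ \!M(r,F)$-bounded, namely $m(r,1/F)\le \log M(r,F)+O(1)$ for large $r$.

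Summing, $T(r,\mathcal L)\le \log M(2r,F)+2\log M(r,F)+O(\log r)$. This is off by one copy of $\log M$ from the stated bound, so I would tighten one of the pieces. The simplest fix is to combine the counting and proximity contributions coming from $1/F$: note $N(r,\mathcal L)\le N(r,1/F)$ up to $O(\log r)$, and $m(r,1/F)+N(r,1/F)=T(r,F)+O(1)\le\log M(r,F)+O(1)$. Then
\[
T(r,\mathcal L)\le m(r,F')+T(r,1/F)+O(\log r)\le \log M(2r,F)+\log M(r,F)+O(\log r),
\]
which is at most $2\log M(2r,F)+O(\log r)$ since $M$ is increasing and $\log M(r,F)>0$ for large $r$ when $F$ is nonconstant.

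The only delicate point is this bookkeeping: counting poles of $\mathcal L$ against zeros of $F$ without double-counting, and handling the origin terms. Everything else is routine.
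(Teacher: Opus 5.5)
Your proof is correct and, once you fix the initial overcount, it is the paper's argument: the bound $T(r,F'/F)\le m(r,F')+T(r,1/F)+O(\log r)$ is the product rule from \eqref{eq:characteristic-rules} applied to $F'\cdot(1/F)$. The paper combines it, as you do, with Cauchy's estimate $M(r,F')\le M(2r,F)/r$ and $T(r,1/F)=T(r,F)+O(1)\le\log M(r,F)+O(1)$, so your detour through separate bounds for $m$ and $N$ can be skipped by citing the product rule directly.
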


\begin{proof}
Recall that
\[
 m(r,g)=\frac1{2\pi}\int_0^{2\pi}
        \log^+|g(re^{i\theta})|\,\dd\theta,
 \qquad
 T(r,g)=m(r,g)+N(r,g),
\]
where $N(r,g)$ is the integrated counting function of the poles of
$g$, with the usual contribution from a pole at the origin.
Jensen's formula (see, for example, \cite[\S1.1]{Hay64}) gives
$$
 m(r,g)-m(r,1/g)
   =N(r,1/g)-N(r,g)+O(1).
$$
Indeed, after factoring out the zeros and poles of $g$ in the disk,
the remaining factor is zero-free, and the mean-value property applied
to the logarithm of its modulus gives the identity above. A zero or
pole at the origin contributes the corresponding multiple of
$\log r$. Rearranging yields
\[
 T(r,1/g)=T(r,g)+O(1).
\]
The product estimate follows immediately from
\[
 \log^+|gh|\le \log^+|g|+\log^+|h|
\]
together with the corresponding inequality for the pole-counting
functions. This proves \eqref{eq:characteristic-rules}.

Now let $F$ be entire. For $|z|=r$, Cauchy's estimate \cite[p.~73, Cauchy's Estimate~2.14]{Con78} on
$B(z,r)\subset B(0,2r)$ gives
\[
 |F'(z)|\le \frac{M(2r,F)}{r},
\]
and hence
\[
 T(r,F')\le \log M(2r,F)+O(\log r).
\]
Applying \eqref{eq:characteristic-rules} to $
 {F'}/F=F'\cdot1/F$
and using
\[
 T(r,1/F)=T(r,F)+O(1),
 \qquad
 T(r,F)\le \log M(r,F),
\]
we obtain
\[
 T\!\left(r,\frac{F'}F\right)
 \le 2\log M(2r,F)+O(\log r),
\]
which is \eqref{eq:logder-characteristic}.
\end{proof}

The other elementary reduction concerns positive measures. We state it
explicitly to distinguish positivity from a representation by a signed
measure.

\begin{lem}\label{lem:positive-extension}
Let $X$ be a compact Hausdorff space, and let $\mathcal A$ be a
real linear subspace of $C(X,\R)$ containing the constants. If a linear functional $\Lambda$ on $\mathcal A$
is nonnegative on the nonnegative functions in $\mathcal A$, then there is a
positive finite measure $\lambda$ on $X$ such that $\Lambda(f)=\int f\dd\lambda$ for
every $f\in\mathcal A$ and $\lambda(X)=\Lambda(1)$.
\end{lem}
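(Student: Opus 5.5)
This is the classical M.~Riesz--Kantorovich extension argument: extend $\Lambda$ to all of $C(X,\R)$ as a positive functional, then apply the Riesz representation theorem. The key observation is that the constant function $1$ lies in $\mathcal A$ and is an order unit for $C(X,\R)$. Every $g\in C(X,\R)$ satisfies $-\|g\|_X\le g\le \|g\|_X$.

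First I record a bound. If $f\in\mathcal A$ and $|f|\le c$ on $X$, then $c\mp f$ are nonnegative functions in $\mathcal A$. Positivity of $\Lambda$ gives $|\Lambda(f)|\le c\,\Lambda(1)$. Hence $\Lambda$ is bounded in the sup norm with $\|\Lambda\|\le\Lambda(1)$, and in fact equality holds because $\Lambda(1)$ is attained at $f=1$.

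Next I extend $\Lambda$. Define the sublinear functional $p(g)=\inf\{\Lambda(f): f\in\mathcal A,\ f\ge g\}$ on $C(X,\R)$. This is finite, since $f=\|g\|_X\cdot 1$ is admissible, so $p(g)<\infty$. It is also bounded below: if $f\ge g$ then $f\ge-\|g\|_X$, so $\Lambda(f)\ge-\|g\|_X\Lambda(1)$ by positivity. Standard checks show that $p$ is sublinear and that $p=\Lambda$ on $\mathcal A$. Here positivity gives $\Lambda(f)\le\Lambda(f')$ whenever $f'\ge f$, which yields $p(f)\ge\Lambda(f)$, and the reverse inequality is trivial.

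Hahn--Banach then gives a linear $\tilde\Lambda$ on $C(X,\R)$ extending $\Lambda$ with $\tilde\Lambda\le p$. If $g\ge0$, then $-g\le 0\in\mathcal A$, so $\tilde\Lambda(-g)\le p(-g)\le\Lambda(0)=0$. Thus $\tilde\Lambda$ is positive. (Alternatively, one can use the norm-preserving Hahn--Banach extension with $\|\tilde\Lambda\|=\Lambda(1)=\tilde\Lambda(1)$, together with the standard fact that a functional with $\|\tilde\Lambda\|=\tilde\Lambda(1)$ is positive.)

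Finally, the Riesz--Markov representation theorem on the compact Hausdorff space $X$ produces a positive finite regular Borel measure $\lambda$ with $\tilde\Lambda(g)=\int g\,d\lambda$ for all $g\in C(X,\R)$. Restricting to $\mathcal A$ gives the representation, and $\lambda(X)=\tilde\Lambda(1)=\Lambda(1)$.

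The only mildly delicate point is verifying that the extension stays positive. The dominating functional $p$, which is built from upper majorants in $\mathcal A$ and relies on the constants being in $\mathcal A$, handles this directly.
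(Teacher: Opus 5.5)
Your proof is correct, but your main route differs from the paper's. The paper uses the alternative you mention in parentheses. From $|\Lambda(f)|\le\Lambda(1)\norm{f}_\infty$ it takes a norm-preserving Hahn--Banach extension $\widetilde\Lambda$ with $\norm{\widetilde\Lambda}=\Lambda(1)$. It then represents $\widetilde\Lambda$ by a finite \emph{signed} measure via the Riesz theorem. Positivity is recovered at the level of measures: $\lambda(X)=\widetilde\Lambda(1)=\norm{\widetilde\Lambda}=|\lambda|(X)$, which forces the negative part to vanish. Your primary argument instead uses the dominated (sublinear) form of Hahn--Banach with the majorant functional $p(g)=\inf\{\Lambda(f):f\in\mathcal A,\ f\ge g\}$. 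Positivity of the extension then comes directly from $\widetilde\Lambda(-g)\le p(-g)\le\Lambda(0)=0$ for $g\ge0$, so you only need the Riesz--Markov theorem for positive functionals. Your checks that $p$ is finite, sublinear, and equal to $\Lambda$ on $\mathcal A$ are all correct. What your route buys is that it is the order-theoretic M.~Riesz--Kantorovich extension. It needs no norm and no identification of $C(X,\R)^*$ with signed measures carrying the total-variation norm. It works whenever $\mathcal A$ contains a majorant of every element of the larger space, which here is supplied by the constants. The paper's route is shorter to write, but it relies on the full signed Riesz duality and on the norm identity to recover positivity after the fact.
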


\begin{proof}
We use the standard extension argument; see
\cite[p.~309, Lemma~2.1]{BD59}.
We equip $\mathcal A$ with the uniform norm inherited from
$C(X,\R)$, and write $\|\Lambda\|$ for the corresponding operator
norm. If $f\in\mathcal A$, then
\[
 -\norm f_\infty\,1\le f\le \norm f_\infty\,1,
\]
and positivity gives
\[
 |\Lambda(f)|\le \Lambda(1)\norm f_\infty.
\]
Hence $\Lambda$ is bounded and $\norm \Lambda=\Lambda(1)$. By the Hahn--Banach theorem,
$\Lambda$ extends to a linear functional $\widetilde \Lambda$ on $C(X,\R)$ with $
 \norm{\widetilde \Lambda}=\norm \Lambda=\Lambda(1).$
By the Riesz representation theorem, there is a finite signed measure
$\lambda$ on $X$ such that
\[
 \widetilde \Lambda(f)=\int_X f\,\dd\lambda,
 \qquad f\in C(X,\R).
\]
Since
$
 \lambda(X)=\widetilde \Lambda(1)=\Lambda(1)
          =\norm{\widetilde \Lambda}=|\lambda|(X),
$
the negative part of $\lambda$ vanishes. Thus $\lambda$ is positive,
and $\lambda(X)=\Lambda(1)$.
\end{proof}

\section{A removable set with large radial projection and its 
approximants}\label{sec:geometry}

The geometric model is required to have a radial projection longer than
a dyadic fundamental interval, while its distinct dyadic dilates remain
pairwise disjoint. A suitable modification of Hallstrom's four-corner
construction \cite[pp.~455--456]{Hal74} provides both properties.

We use the following notation throughout this section and retain it
in the subsequent construction. Let
\[
 \calC=\left\{\sum_{n\ge1}3\varepsilon_n4^{-n}:
                       \varepsilon_n\in\{0,1\}\right\},
 \qquad
 Q=\calC+i\calC=\{x+iy:x,y\in\calC\}.
\]
Thus $Q$ denotes the four-corner Cantor set, not the unit square.
Its finite-stage approximants are
\[
 \calC_0=[0,1],\qquad
 \calC_N=\tfrac14\calC_{N-1}\cup
             \bigl(\tfrac34+\tfrac14\calC_{N-1}\bigr),\qquad
 Q_N=\calC_N+i\calC_N\quad(N\ge1).
\]
We also put $Q_0=[0,1]+i[0,1]$. Then
$Q=\bigcap_{N\ge0}Q_N$, and $Q_N$ consists of $L_N=4^N$ closed squares
of side length $h_N=4^{-N}$, denoted by
$Q_{N,1},\ldots,Q_{N,L_N}$. For $\ell>0$, define
\[
 T_\ell(z)=\exp\left(\frac{\ell}{3}(1-2i)z+\frac{i\ell}{6}\right),
 \qquad E_\ell=T_\ell(Q).
\]

\begin{lem}\label{lem:skeleton}
There exist parameters $\ell>0$ and $\theta_0\in(0,\pi/2)$ such that
the compact set $E=E_\ell=T_\ell(Q)$ lies in the sector
$|\arg w|<\theta_0$ and satisfies
\begin{equation}\label{eq:radial-model}
 \{|w|:w\in E\}=[1,e^\ell],\qquad \log2<\ell<2\log2.
\end{equation}
The sets $2^jE$, $j\in\Z$, are pairwise disjoint, and each is locally
removable for bounded holomorphic functions. Every bounded holomorphic
function on $\C\setminus Z$ is constant, where
\begin{equation}\label{eq:closed-skeleton}
 Z=\{0\}\cup\bigcup_{j\in\Z}2^jE.
\end{equation}
\end{lem}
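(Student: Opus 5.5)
The plan is to pass to logarithmic coordinates, where $T_\ell$ is the exponential of an affine map. The radial and angular statements then become linear projections of $Q$, and disjointness of the dilates becomes a statement about translates of $Q$.

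\textbf{Radial and angular ranges.} For $z=x+iy\in Q_0$,
\[
\log|T_\ell(z)|=\tfrac{\ell}{3}(x+2y),\qquad \arg T_\ell(z)=\tfrac{\ell}{3}\bigl(y-2x+\tfrac12\bigr).
\]
For $x,y\in\calC$ with digits $\varepsilon_n,\varepsilon_n'$, we have $x+2y=\sum_n 3(\varepsilon_n+2\varepsilon_n')4^{-n}$. The digits $\varepsilon_n+2\varepsilon'_n$ run through all of $\{0,1,2,3\}$, so $x+2y$ fills exactly $3\cdot[0,1]=[0,3]$. Hence $\{|w|:w\in E\}=[1,e^\ell]$. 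Also $y-2x+\tfrac12\in[-\tfrac32,\tfrac32]$, so $|\arg w|\le \ell/2<\log 2<\pi/2$. We may therefore take any $\theta_0\in(\ell/2,\pi/2)$. The exponential is injective on the horizontal strip $|\operatorname{Im}|<\pi$, which contains the affine image of $Q_0$. Thus $T_\ell$ is a conformal homeomorphism of a neighbourhood of $Q_0$ onto its image.

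\textbf{Disjointness.} Since all points of $E$ have $|\arg|<\pi/2$, the sets $2^jE$ and $E$ meet only if their logarithms (principal branch) meet. Now $\log(2^jE)=L(Q)+j\log2$ with $L(z)=\tfrac{\ell}{3}(1-2i)z+\tfrac{i\ell}{6}$, and $L^{-1}$ of the real shift $j\log 2$ is $t_j=s_j(1+2i)$ with $s_j=3j\log2/(5\ell)$. So $2^jE\cap E\ne\emptyset$ iff $Q\cap(Q+t_j)\neq\emptyset$, which requires both $s_j\in\calC-\calC$ and $2s_j\in\calC-\calC$. Because $\calC-\calC\subset[-1,1]$ and $\ell<2\log 2$, we get $|2s_j|>1.2$ whenever $|j|\ge2$, so only $j=\pm1$ matters, and by symmetry only $j=1$. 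The set $\calC-\calC=\{\sum 3\delta_n4^{-n}:\delta_n\in\{-1,0,1\}\}$ is a self-similar set of dimension $\log3/\log4<1$, hence Lebesgue null. So the set of $\ell\in(\log2,2\log2)$ with $3\log2/(5\ell)\in\calC-\calC$ is null (image of a null set under a smooth map), and we choose $\ell$ outside it. Disjointness of $2^jE$ and $2^kE$ follows by dilation. This choice of parameter is the only genuinely delicate step; the main thing to verify is that the branch bookkeeping really reduces intersection to the translate condition, which the small-argument bound guarantees.

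\textbf{Removability.} Hallstrom's (Garnett's) theorem gives that the four-corner Cantor set $Q$ has zero analytic capacity. If $f$ is bounded holomorphic on $V\setminus E$ for a neighbourhood $V$ of $E$, then $f\circ T_\ell$ is bounded holomorphic on a punctured neighbourhood of $Q$. It therefore extends across $Q$, and composing with $T_\ell^{-1}$ extends $f$ across $E$; the same holds for each $2^jE$. Next, $Z$ is closed, and the compact sets $2^jE\subset\{2^j\le|w|\le 2^je^\ell\}$ are pairwise disjoint and locally finite in $\C\setminus\{0\}$. So a bounded holomorphic $f$ on $\C\setminus Z$ extends successively across each of them to $\C\setminus\{0\}$. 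The origin is a removable isolated singularity of a bounded function, and Liouville's theorem then makes $f$ constant.
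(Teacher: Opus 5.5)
Your proof is correct; it departs from the paper only in how you get disjointness of $E$ and $2E$.

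The paper argues by perturbation. At $\ell=\log2$ the radial ranges $[1,2]$ and $[2,4]$ of $E_\ell$ and $2E_\ell$ touch only at radius $2$. There each set contains a single point, $2e^{-i\ell/6}$ and $2e^{i\ell/6}$ respectively, and these are distinct, so $E_\ell\cap2E_\ell=\varnothing$. Compactness of $Q$ and continuous dependence of $T_\ell$ on $\ell$ keep the two sets disjoint for $\ell$ slightly larger than $\log2$. The cases $|j|\ge2$ follow from $e^\ell<4$.

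You instead work in logarithmic coordinates. You reduce $E\cap2^jE\ne\varnothing$ to the arithmetic condition $s_j,2s_j\in\calC-\calC$, which is in fact an equivalence since $Q=\calC\times\calC$. Since $\calC-\calC$ is Lebesgue-null, almost every $\ell\in(\log2,2\log2)$ is admissible.

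Your route yields more. It gives an explicit, almost-full set of good parameters. It also gives univalence of $T_\ell$ for every $\ell$ in the range, via injectivity of $\exp$ on a horizontal strip of height $2\pi$, whereas the paper claims this only for $\ell$ near $\log2$. The paper's route, in exchange, needs nothing beyond continuity.

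The step you flag as delicate is not actually delicate. Your own criterion gives $2s_1=6\log2/(5\ell)>1$ for every $\ell\in(\log2,\tfrac65\log2)$. Hence $2s_1\notin\calC-\calC\subset[-1,1]$, and $E\cap2E=\varnothing$ with no measure-theoretic input. This explicit interval is a quantitative version of the paper's ``sufficiently close to $\log2$''.

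A minor attribution point: zero analytic capacity of $Q$ is Garnett's theorem; Hallstrom used it.
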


\begin{proof}
The four-corner set $Q$ has zero analytic capacity; this is the
classical example of Garnett
\cite[Sect.~1, pp.~696--698]{Gar70}.  Hallstrom
\cite[pp.~455--456]{Hal74} used this set, followed by a suitable
affine change of variables and an exponential map, to construct a
compact removable set having full radial projection over a prescribed
annular interval.  We use the same basic mechanism, with the additional
choice of parameters needed below to separate all dyadic copies.

We first determine the radial projection of $E_\ell$. Since
$
 \calC+2\calC=[0,3],
$
the real part of
\[
 \frac{\ell}{3}(1-2i)z+\frac{i\ell}{6},
 \qquad z\in Q,
\]
ranges over $[0,\ell]$. Hence
\[
 \{|w|:w\in E_\ell\}=[1,e^\ell].
\]
Moreover, the imaginary part of the exponent ranges over
$[-\ell/2,\ell/2]$. If $\ell$ is sufficiently close to $\log 2$,
then $T_\ell$ is univalent on a neighborhood of the closed unit square $Q_0$.
Consequently, local removability of $Q$ is preserved under $T_\ell$:
a bounded holomorphic function defined off $E_\ell$ pulls back to one
defined off $Q$, extends across $Q$, and then pushes forward to an
extension across $E_\ell$.

We next arrange separation of the dyadic copies. At $\ell=\log2$,
the radial intervals of $E_\ell$ and $2E_\ell$ meet only at radius $2$.
The corresponding points are
\[
 2e^{-i\ell/6}
 \qquad\text{and}\qquad
 2e^{i\ell/6},
\]
respectively, and are distinct. Thus $E_\ell$ and $2E_\ell$ are
disjoint at $\ell=\log2$. Since $T_\ell$ depends uniformly on $\ell$
on $Q$, the same remains true for all $\ell>\log2$ sufficiently close
to $\log2$. Fix such an $\ell$ with $\ell<2\log2$, and set
$E=E_\ell$. Since $e^\ell<4$, dyadic copies whose indices differ by
at least two are separated by their radial ranges. It follows that
the sets $2^jE$, $j\in\Z$, are pairwise disjoint.

Finally, consider the set $Z$ defined in \eqref{eq:closed-skeleton}.
The set $Z$ is closed, and every compact annulus avoiding the origin
meets only finitely many of the sets $2^jE$. Hence a bounded
holomorphic function on $\C\setminus Z$ extends successively across
all these locally removable pieces. The resulting function is
holomorphic on $\C\setminus\{0\}$ and remains bounded near the origin,
so the singularity at $0$ is removable. The extension is therefore a
bounded entire function and hence is constant by Liouville's theorem.
\end{proof}

For the remainder of the paper, fix the value of $\ell$ chosen in the
proof of Lemma~\ref{lem:skeleton}, and retain $E=E_\ell=T_\ell(Q)$
and the set $Z$ in \eqref{eq:closed-skeleton}.
We next approximate $E$ by a finite family of disks. The
approximation is fixed before the order-dependent measure is introduced;
although the number of disks may be large, it remains finite.

\begin{lem}\label{lem:disk-model}
For every sufficiently large integer $N$, there are disks
$H_{N,1},\ldots,H_{N,L_N}$, indexed by the squares
$Q_{N,1},\ldots,Q_{N,L_N}$ above, with the following properties. Their union
contains $E$ and converges to $E$ in Hausdorff distance. The closed disks
\begin{equation}\label{eq:all-hole-disks}
 2^j\overline{H_{N,k}},\qquad j\in\Z,\quad 1\le k\le L_N,
\end{equation}
are pairwise disjoint. They lie in a fixed open sector in the right half-plane, and
their radii are less than $1/100$ of the moduli of their centers. For
each fixed $N$, a common enlargement factor greater than one preserves
disjointness of the entire family \eqref{eq:all-hole-disks}.
\end{lem}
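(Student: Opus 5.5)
We take each disk to be the circumscribed disk of the image of a square, slightly enlarged:
\[
 H_{N,k}=B\bigl(T_\ell(c_{N,k}),\,\kappa\, r_{N,k}\bigr),
\]
where $c_{N,k}$ is the center of $Q_{N,k}$ and $r_{N,k}=\max_{z\in Q_{N,k}}|T_\ell(z)-T_\ell(c_{N,k})|$. Here $\kappa>1$ is a fixed constant, say $\kappa=2$. Then $T_\ell(Q_{N,k})\subset H_{N,k}$. Since $Q\subset Q_N=\bigcup_k Q_{N,k}$, the union of the disks contains $E$.

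\emph{Size and Hausdorff convergence.} Each $Q_{N,k}$ meets $Q$, because every stage-$N$ square contains points of the Cantor set. Moreover $T_\ell$ is Lipschitz on $Q_0$ with a constant $C$ depending only on $\ell$, since $|T_\ell'|\le \tfrac{\sqrt5\,\ell}{3}e^{\ell}$ there. Hence $r_{N,k}\le C\sqrt2\,4^{-N}$. Every point of $\bigcup_k H_{N,k}$ is therefore within $O(4^{-N})$ of $E$. Together with the containment of $E$, this gives Hausdorff distance $O(4^{-N})\to0$. The moduli of the centers lie in $[1,e^\ell]$, so for $N$ large the radii are below $1/100$ of these moduli. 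All centers lie in $|\arg w|\le\ell/2<\theta_0$, where $\theta_0<\pi/2$ is the angle from Lemma~\ref{lem:skeleton}. The disks are tiny, so the closed disks, and all their dyadic dilates (dilation preserves arguments), lie in the open sector $|\arg w|<\theta_1$ for any fixed $\theta_1\in(\theta_0,\pi/2)$.

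\emph{Disjointness.} This is the main point. There are two kinds of pairs.

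For distinct dilates, the sets $2^jE$ are pairwise disjoint compacts by Lemma~\ref{lem:skeleton}. Dilates with $|j-j'|\ge2$ have disjoint radial ranges with a gap, because $e^\ell<4$. Hence $\delta:=\dist(E,2E)>0$ controls everything by scaling. For $|j-j'|\ge2$, the separation follows from the radial gap, since $4^{-1}e^{\ell}<1$ once the small disks are added. For $|j-j'|=1$, rescale to $j=0$, $j'=1$. The disks $\overline{H_{N,k}}$ lie within the $O(4^{-N})$-neighborhood of $E$, and the disks $2\overline{H_{N,k'}}$ lie within the $O(4^{-N})$-neighborhood of $2E$. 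These neighborhoods are disjoint once $O(4^{-N})<\delta/3$.

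The delicate case is disks within the same copy, $j=j'$ and $k\ne k'$. Here I would use the geometry of the Cantor squares: at stage $N$ two distinct squares $Q_{N,k},Q_{N,k'}$ are separated by a gap of at least $2\cdot4^{-N}$, which is twice their side length. We need to compare this gap with the disk radii, which are $\kappa r_{N,k}\approx \kappa|T_\ell'(c)|\tfrac{\sqrt2}{2}4^{-N}$. This is exactly where $\kappa$ must be kept close to $1$ rather than $2$. To make the comparison I would use the univalence and near-conformality of $T_\ell$ on small scales: $T_\ell$ is locally almost a similarity with distortion $1+O(4^{-N})$. So the images of two squares are separated by about $2\cdot4^{-N}|T_\ell'|$, up to $1+o(1)$, while each circumscribed disk has radius about $\tfrac{\sqrt2}{2}4^{-N}|T_\ell'|$. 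Since $2>\sqrt2$, there is room. Taking $\kappa$ slightly above $1$ (e.g. $\kappa=1.1$) and $N$ large, two such disks stay disjoint, with a margin proportional to $4^{-N}$. For squares that are far apart, use injectivity of $T_\ell$ and its bi-Lipschitz bound on $Q_0$.

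\emph{Common enlargement.} For fixed $N$, the family \eqref{eq:all-hole-disks} is a finite configuration $\{\overline{H_{N,k}}\}_k\cup\{2\overline{H_{N,k}}\}_k$ repeated by dyadic scaling. The closed disks are pairwise disjoint and finitely many within the fundamental range, so they have a positive relative margin. Concretely, $|a-a'|>s+s'$ for all pairs of centers and radii, and scale invariance makes the ratio $\bigl(|a-a'|-s-s'\bigr)/\max(s,s')$ bounded below uniformly over all $j$. Pairs with $|j-j'|\ge2$ are handled by the radial gap. Hence multiplying all radii by some $1+\epsilon$ preserves disjointness of the entire family.
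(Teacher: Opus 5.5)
Your proposal is correct and follows essentially the paper's argument. You use near-conformality of $T_\ell$ for nearby squares and the lower Lipschitz bound for distant ones, the radial gap for $|j-j'|\ge2$, $\dist(E,2E)>0$ for adjacent scales, and a finite normalized configuration for the common enlargement.

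The differences are cosmetic. You take $\kappa$ times the exact circumradius, which makes $E\subset\bigcup_k H_{N,k}$ automatic; the paper takes radius $\tfrac34|T_\ell'(\xi_{N,k})|h_N$ and gets containment from Taylor's formula. Also, comparing the center separation $3h_N$ rather than the gap $2h_N$ shows that any $\kappa<3/\sqrt2$ works, so your initial choice $\kappa=2$ would in fact also have been fine.
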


\begin{proof}
Recall that $Q_N$ is the $N$th-stage approximation to the four-corner
set $Q=\calC+i\calC$, and that each square $Q_{N,k}$ has side length
$h_N=4^{-N}$. Let $\xi_{N,k}$ be the center of $Q_{N,k}$ and define
\[
 H_{N,k}=B\left(T_\ell(\xi_{N,k}),
                  \frac34|T_\ell'(\xi_{N,k})|h_N\right),
 \qquad 1\leq k\leq L_N.
\]
Since $T_\ell$ is holomorphic on a neighborhood of $Q_0$,
Taylor's formula gives, uniformly in $k$,
\[
 \max_{z\in Q_{N,k}}
 |T_\ell(z)-T_\ell(\xi_{N,k})|
 \le \frac{\sqrt2}{2}|T_\ell'(\xi_{N,k})|h_N+O(h_N^2).
\]
Because $\sqrt2/2<3/4$, the image of each square $Q_{N,k}$ is
contained in $H_{N,k}$ for all sufficiently large $N$. It follows at once that the
union of these disks contains $E$ and converges to $E$ in Hausdorff
distance.

We next verify that the disks have pairwise disjoint closures.
Since $T_\ell$ is univalent on a neighborhood of $Q_0$
and $T_\ell'$ does not vanish there, the divided difference
\[
 \Psi(z,z')=
 \begin{cases}
 \dfrac{T_\ell(z)-T_\ell(z')}{z-z'},& z\ne z',\\[6pt]
 T_\ell'(z),& z=z',
 \end{cases}
\]
is continuous and nonzero on the compact set \(Q_0\times Q_0\).
Hence compactness gives $m_*>0$ such that
\[
 |T_\ell(z)-T_\ell(z')|\ge m_*|z-z'|
\]
for all $z,z'\in Q_0$. Let $\xi$ and $\xi'$ denote the centers of
two distinct squares among the $Q_{N,k}$. They are separated by at
least $3h_N$.
If $|\xi-\xi'|\le h_N^{1/2}$, Taylor's formula gives, uniformly over
these pairs,
\[
 \begin{split}
 |T_\ell(\xi)-T_\ell(\xi')|
   &=(1+O(h_N^{1/2}))|T_\ell'(\xi)|\,|\xi-\xi'|,\\
 |T_\ell'(\xi')|&=(1+O(h_N^{1/2}))|T_\ell'(\xi)|.
 \end{split}
\]
Consequently, the distance between the image centers is at least
$(3+O(h_N^{1/2}))|T_\ell'(\xi)|h_N$, whereas the sum of the radii is
$(3/2+O(h_N^{1/2}))|T_\ell'(\xi)|h_N$.
If $|\xi-\xi'|>h_N^{1/2}$, the distance between the image centers
is at least $m_*h_N^{1/2}$, whereas the sum of the radii is $O(h_N)$.
Thus all the basic disks have pairwise disjoint closures for
sufficiently large $N$.

It remains to compare different dyadic scales. By scaling, it is enough
to compare a basic disk with the copies at relative scales $2^m$,
$m\in\Z$. If $|m|\ge2$, the corresponding copies are separated by their
radial ranges for all sufficiently large $N$. The two adjacent cases
$m=\pm1$ follow from the positive separation between $E$ and $2E$
established in Lemma~\ref{lem:skeleton}. Hence the entire family
\eqref{eq:all-hole-disks} has pairwise disjoint closures.

The sector condition and the bound on the ratio of each radius to the
modulus of its center follow from the corresponding properties of
$E$ once $N$ is sufficiently large. Finally, after normalizing one
disk to the basic scale, only finitely many relative dyadic scales can
come near it. The minimum separation over these finitely many
configurations is therefore positive, and a common enlargement factor
greater than one may be chosen while preserving pairwise disjointness.
\end{proof}

The outer boundary plays no direct role, but it must be chosen
compatibly with all dyadic holes, since its dilates will later contribute
to the support of the measure.

\begin{lem}\label{lem:outer-boundary}
After choosing $R_N\to\infty$ sufficiently large, there is a smooth Jordan
domain $\Omega_N$ satisfying
\begin{equation}\label{eq:outer-sandwich}
 B(0,0.9R_N)\subset\Omega_N\subset B(0,1.1R_N)
\end{equation}
whose boundary avoids every closed disk in \eqref{eq:all-hole-disks}. Put
\begin{equation}\label{eq:perforated-compact}
 K_N=\overline{\Omega_N}\setminus
             \bigcup_{\substack{j\in\Z\\1\leq k\leq L_N}}2^jH_{N,k},\qquad
 S(w)=w/2,
 \qquad p=-1,\quad q=-1/2.
\end{equation}
Then $K_N$ is compact, $S(K_N)\subset K_N$, and $p,q\in\interior K_N$. The
domain $\interior K_N$ is connected. Every compact subset of $\C\setminus Z$
has a neighborhood contained in $\interior K_N$ for all sufficiently large
$N$.
\end{lem}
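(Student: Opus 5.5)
The plan is to build $\Omega_N$ by perturbing the disk $B(0,R_N)$ near its boundary circle. The key point is that near the circle $|w|=R_N$ only finitely many hole disks are present.

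\textbf{Choice of $R_N$ and the outer boundary.} By Lemma~\ref{lem:disk-model}, all closed disks in \eqref{eq:all-hole-disks} lie in a fixed sector $|\arg w|<\theta_1<\pi/2$, and each has radius less than $1/100$ of the modulus of its center. Hence every closed disk meeting the annulus $0.95R\le|w|\le1.05R$ is contained in $0.9R<|w|<1.1R$. These disks form a finite, pairwise disjoint family. I will take $R_N=2^{m_N}$ with $m_N\to\infty$; the exact size of $R_N$ matters little, only that it tends to infinity. I would define $\partial\Omega_N$ as the circle $|w|=R_N$, modified near each hole disk it crosses: replace the arc inside a slightly enlarged disk $2^j\overline{H_{N,k}}(t)$, $t>1$, by an arc running around the outside of that enlarged disk, and smooth the corners. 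Because the enlarged disks are still disjoint (the common enlargement factor of Lemma~\ref{lem:disk-model}) and finite in number near the circle, the resulting curve is a smooth Jordan curve contained in $0.9R_N<|w|<1.1R_N$. It avoids every closed hole disk, and \eqref{eq:outer-sandwich} holds.

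\textbf{Compactness, interior points and invariance.} $K_N$ is closed and bounded, since we remove only an open set from $\overline{\Omega_N}$. Hence it is compact. The points $p=-1$ and $q=-1/2$ lie in the left half-plane, away from the sector containing the holes, so a small disk around each avoids all holes. For $R_N>2$ these disks also lie in $\Omega_N$, so $p,q\in\interior K_N$.

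For $S(K_N)\subset K_N$, let $w\in K_N$. Then $w/2$ is not in any hole, since the hole family is invariant under halving. It remains to see $w/2\in\overline{\Omega_N}$. Here $|w/2|\le0.55R_N<0.9R_N$, so $w/2\in B(0,0.9R_N)\subset\Omega_N$.

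\textbf{Connectedness and exhaustion.} $\interior K_N$ is $\Omega_N$ minus a family of disjoint closed disks, each lying inside $\Omega_N$ (because $\partial\Omega_N$ avoids them) and accumulating only at $0$. Removing disjoint closed disks from a Jordan domain leaves it connected, provided the removed family is locally finite away from its accumulation point. Points near $0$ connect through the left half-plane, which contains no holes.

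For the exhaustion statement, let $F\subset\C\setminus Z$ be compact, and put $d=\dist(F,Z)>0$. I will distinguish two regions.
\begin{enumerate}
\item Near the origin, take $|w|<\epsilon$. Here I must ensure that small holes $2^jH_{N,k}$ do not approach $F$. They do not: a disk $2^jH_{N,k}$ lies within $2^j\cdot\mathrm{Hausdorff}(E,\bigcup_k H_{N,k})\to0$ of $2^jE$. Since the Hausdorff distance is uniformly small in scaled units, and $F$ stays away from $0$ (as $0\in Z$), this region causes no trouble.
\item In the annulus containing $F$, only finitely many $j$ matter. The Hausdorff convergence of Lemma~\ref{lem:disk-model} puts all holes within $d/2$ of $Z$ once $N$ is large.
\end{enumerate}
Taking $N$ large so that also $0.9R_N>2\max_F|w|$, the $d/2$-neighborhood of $F$ lies in $\interior K_N$.

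\textbf{Main obstacle.} I expect the hardest step to be the careful construction of the smooth Jordan curve avoiding the holes. The subtle point is that the hole disks must lie strictly inside $\Omega_N$; this is resolved by the local detours around the finitely many enlarged disks.
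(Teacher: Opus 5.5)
Your proposal is correct and follows essentially the same route as the paper, which also takes $\Omega_N$ to be a smoothing of $B(0,R_N)$ together with the slightly enlarged hole disks meeting $|w|=R_N$. The paper likewise obtains invariance from $|w/2|\le 0.55R_N$, connectedness from detours around the finitely many holes near a path avoiding $0$, and exhaustion from finitely many relevant scales plus Hausdorff convergence. The paper also perturbs $R_N$ to make the circle meet the enlarged disks transversally, whereas you fix $R_N=2^{m_N}$; this is harmless, since a circle that meets an original closed hole automatically crosses that hole's enlargement at exactly two transversal points.
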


\begin{proof}
Choose $R_N$ sufficiently large so that
\[
 \bigcup_{k=1}^{L_N}\overline{H_{N,k}}
 \subset B(0,0.9R_N).
\]
Slightly enlarge the disks from Lemma~\ref{lem:disk-model}, preserving
the pairwise disjointness of their closures. After a small perturbation
of $R_N$, we may assume that the circle $|w|=R_N$ meets the boundaries
of all enlarged disks transversely. Only finitely many such disks meet
this circle.

Let $G_N$ be the union of $B(0,R_N)$ with all enlarged disks that meet
$|w|=R_N$. Each of these disks intersects $B(0,R_N)$ in a nonempty
connected lens, and the disks are mutually disjoint. Consequently, the
outer boundary of $G_N$ is a Jordan curve consisting of finitely many
circular arcs, and it is disjoint from all the original closed holes.
Smoothing its finitely many corner points in sufficiently small
neighborhoods gives a smooth Jordan domain $\Omega_N$ whose boundary
still avoids every closed disk in \eqref{eq:all-hole-disks}. Since the
radius of each hole is less than one hundredth of the modulus of its
center, the above modifications may be confined to the annulus
\[
 B(0,1.1R_N)\setminus \overline{B(0,0.9R_N)}.
\]
Thus \eqref{eq:outer-sandwich} holds.

The set removed from $\overline{\Omega_N}$ in
\eqref{eq:perforated-compact} is open, and hence $K_N$ is compact.
If $w\in K_N$, then \eqref{eq:outer-sandwich} gives
\[
 |S(w)|=\frac{|w|}{2}\le 0.55R_N<0.9R_N,
\]
so $S(w)\in\Omega_N$. Since the family of holes is invariant under
dyadic scaling, $w$ lying outside every hole implies that $S(w)$ also
lies outside every hole. Hence
\[
 S(K_N)\subset K_N.
\]
The points $p=-1$ and $q=-1/2$ lie in the left half-plane, away from
all the holes, and for large $N$ they belong to the inner disk
$B(0,0.9R_N)$. Thus $p,q\in\interior K_N$.

We next prove that $\interior K_N$ is connected. The closed holes
contained in $\Omega_N$ can accumulate only at the origin. Given
$x,y\in\interior K_N$, choose a compact path joining $x$ to $y$ in
$\Omega_N\setminus\{0\}$. A sufficiently small neighborhood of this
path meets only finitely many holes. Replacing each portion of the path
that crosses one of these holes by a short detour through its exterior
collar produces a path from $x$ to $y$ contained entirely in
$\interior K_N$. Thus $\interior K_N$ is connected.

Finally, let $\mathcal K$ be a compact subset of $\C\setminus Z$. Then $\mathcal K$ is
contained in a compact annulus avoiding the origin and satisfies
$
 \dist(\mathcal K,Z)>0.
$
Only finitely many dyadic scales can meet a slightly larger annulus.
At each of these scales, the corresponding disk families converge in
Hausdorff distance to the appropriate dilate of $E$, whereas the outer
boundary of $\Omega_N$ tends to infinity. It follows that, for all
sufficiently large $N$, a fixed neighborhood of $\mathcal K$ is disjoint from
the holes and contained in $\Omega_N$. Hence that neighborhood is
contained in $\interior K_N$, proving the final assertion.
\end{proof}

\section{From close evaluations to positive boundary
measures}\label{sec:measures}

The role of removability is captured by the following limiting property:
bounded rational functions on the perforated sets can no longer distinguish
two fixed points in the left half-plane.

\begin{lem}\label{lem:close-evaluations}
For the compacts in \eqref{eq:perforated-compact}, define
\begin{equation}\label{eq:evaluation-distance}
 d_N=\sup\bigl\{|h(q)-h(p)|:
                    h\in\calR(K_N),\ \norm h_{K_N}\le1\bigr\}.
\end{equation}
Then $d_N\to0$.
\end{lem}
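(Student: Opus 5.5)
We argue by contradiction combined with a normal-families argument, using the removability statement of Lemma~\ref{lem:skeleton} and the exhaustion property in the last assertion of Lemma~\ref{lem:outer-boundary}.

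Suppose $d_N\not\to0$. Then there are $\varepsilon>0$, a subsequence $N_m\to\infty$, and functions $h_m\in\calR(K_{N_m})$ with $\norm{h_m}_{K_{N_m}}\le1$ and $|h_m(q)-h_m(p)|\ge\varepsilon$. Each $h_m$ is a rational function whose poles lie off $K_{N_m}$, so $h_m$ is holomorphic on $\interior K_{N_m}$ and bounded by $1$ there.

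Next we extract a limit on $\C\setminus Z$. Exhaust the open set $\C\setminus Z$ by compact sets $\mathcal K_1\subset\mathcal K_2\subset\cdots$, each contained in the interior of the next. By the last assertion of Lemma~\ref{lem:outer-boundary}, each $\mathcal K_i$ has a neighborhood contained in $\interior K_{N_m}$ for all large $m$. On that neighborhood $|h_m|\le1$ and $h_m$ is holomorphic. Montel's theorem, together with a diagonal argument, gives a further subsequence converging locally uniformly on $\C\setminus Z$ to a holomorphic function $h$ with $|h|\le1$.

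We use this limit to reach the contradiction. The set $\C\setminus Z$ is open and contains $p=-1$ and $q=-1/2$, since $Z\setminus\{0\}$ lies in a sector in the right half-plane. Local uniform convergence at these points gives $|h(q)-h(p)|\ge\varepsilon$. By Lemma~\ref{lem:skeleton}, every bounded holomorphic function on $\C\setminus Z$ is constant, so $h(p)=h(q)$, which is a contradiction.

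The main subtlety is to make sure the limit is defined on all of $\C\setminus Z$, including points near the origin and arbitrarily far out. This is exactly what the final clause of Lemma~\ref{lem:outer-boundary} provides, since $R_N\to\infty$ and the hole families converge to the dilates of $E$. Connectedness of $\C\setminus Z$ is not needed, because the Liouville-type statement in Lemma~\ref{lem:skeleton} already applies to the whole open set. We also never need the uniform closure of $\calR(K_N)$: the bound $|h_m|\le1$ on an open neighborhood suffices for normality.
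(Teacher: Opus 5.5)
Your proposal is correct and follows the paper's proof essentially step for step. You take a contradiction sequence $h_m$ and use the final clause of Lemma~\ref{lem:outer-boundary} to get uniform boundedness near each compact subset of $\C\setminus Z$. Montel's theorem then gives a limit $h$; your explicit diagonal extraction spells out what the paper leaves implicit. Finally, the Liouville-type conclusion of Lemma~\ref{lem:skeleton} forces $h(p)=h(q)$, contradicting $|h(q)-h(p)|\ge\varepsilon$.
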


\begin{proof}
Suppose, to the contrary, that $d_N\nrightarrow0$. Then there exist
$\varepsilon>0$, a strictly increasing sequence $N_m\to\infty$,
and functions $h_m\in\calR(K_{N_m})$ such that
\[
 \norm{h_m}_{K_{N_m}}\le1,
 \qquad
 |h_m(q)-h_m(p)|\ge\varepsilon .
\]
By the final assertion of Lemma~\ref{lem:outer-boundary}, every compact
subset of $\C\setminus Z$ is eventually contained, together with a
neighborhood, in $\interior K_{N_m}$. Hence the functions $h_m$ are
eventually holomorphic and uniformly bounded on a neighborhood of each
compact subset of $\C\setminus Z$.

By Montel's theorem \cite[p.~153, Theorem~2.9]{Con78}, there is a
subsequence, still indexed by $m$, converging locally uniformly on
$\C\setminus Z$ to a holomorphic function $h$ with $|h|\le1$. By Lemma~\ref{lem:skeleton}, every bounded
holomorphic function on $\C\setminus Z$ is constant. Thus $h(p)=h(q)$.
On the other hand, local uniform convergence at $p$ and $q$ gives
\[
 |h(q)-h(p)|
   =\lim_{m\to\infty}|h_m(q)-h_m(p)|
   \ge\varepsilon,
\]
a contradiction.
\end{proof}

Fix $0<\rho<1/2$. By Lemma~\ref{lem:close-evaluations}, we may choose
$N$ sufficiently large so that
\begin{equation}\label{eq:alpha-choice}
 \alpha=2^{-\rho},
 \qquad
 d_N<\frac{1-\alpha}{1+\alpha},
 \qquad
 \alpha<\beta<\frac{1-d_N}{1+d_N}.
\end{equation}
Fix such an $N$ and a choice of $\beta$ for the remainder of the
construction. We use the abbreviations
\[
 K=K_N,\qquad \Omega_{\mathrm{out}}=\Omega_N,\qquad
 H_k=H_{N,k},\qquad L=L_N,\qquad d=d_N.
\]
Write $H_k=B(b_k,t_k)$, where $b_k\in\C$ and $t_k>0$, for
$1\leq k\leq L$, and put
\[
 \Gamma=\partial\Omega_{\mathrm{out}},\qquad
 \Omega=\interior K.
\]
The symbols $\Omega_{\mathrm{out}}$ and $\Omega$ retain these meanings
throughout the paper; the fixed evaluation points remain $p=-1$ and
$q=-1/2$. Then $\partial K$ consists of the outer boundary $\Gamma$,
the boundary circles of the holes contained in $\Omega_{\mathrm{out}}$,
and the point $0$.

\begin{lem}\label{lem:cayley-domination}
If $h\in\calR(K)$ and $\Rea h\ge0$ on $\partial K$, then
\begin{equation}\label{eq:positive-difference}
 \Rea h(q)\ge\beta\Rea h(p).
\end{equation}
There is a positive measure $\tau$ supported on $\partial K$ such that
\[
 \int_{\partial K} h\dd\tau=h(q)-\beta h(p)
 \quad(h\in\calR(K)),\qquad \tau(\partial K)=1-\beta.
\]
\end{lem}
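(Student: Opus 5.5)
The plan is to prove the inequality \eqref{eq:positive-difference} by a Cayley-transform argument fed into the definition \eqref{eq:evaluation-distance} of $d=d_N$. The measure $\tau$ will then come from Lemma~\ref{lem:positive-extension} applied to real parts of functions in $\calR(K)$ restricted to $\partial K$.

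\emph{Step 1 (the inequality).} Let $h\in\calR(K)$ with $\Rea h\ge0$ on $\partial K$.
\begin{itemize}
\item Since $h$ has no poles on $K$, $\Rea h$ is harmonic on the bounded open set $\Omega=\interior K$ and continuous on $K$. Moreover $\partial\Omega\subset\partial K$, so the minimum principle gives $\Rea h\ge0$ on all of $K$.
\item Fix $\varepsilon>0$ and put $h_\varepsilon=h+\varepsilon$, so that $\Rea h_\varepsilon>0$ on $K$.
\item Let $\phi$ be a M\"obius map carrying the right half-plane onto the unit disk with $\phi(h_\varepsilon(p))=0$. Then $G=\phi\circ h_\varepsilon$ is rational.
\item Its poles occur only where $h_\varepsilon$ takes the single value $\phi^{-1}(\infty)$, which lies in the open left half-plane. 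That value is never attained on $K$, so $G\in\calR(K)$, $\norm G_K\le1$, and $G(p)=0$.
\item By the definition of $d$, $|G(q)|=|G(q)-G(p)|\le d$. Thus the pseudo-hyperbolic distance in the right half-plane between $w_1=h_\varepsilon(p)$ and $w_2=h_\varepsilon(q)$ is at most $d$.
\item The pseudo-hyperbolic disk of radius $d$ about $w_1$ in the right half-plane is the Euclidean disk with real diameter endpoints $\Rea w_1\cdot\frac{1-d}{1+d}$ and $\Rea w_1\cdot\frac{1+d}{1-d}$. This is a Harnack-type computation, checked by reducing to $w_1=1$ via $w\mapsto (w-i\Im w_1)/\Rea w_1$.
\item Hence $\Rea w_2\ge\frac{1-d}{1+d}\Rea w_1\ge\beta\Rea w_1$, using \eqref{eq:alpha-choice} and $\Rea w_1>0$. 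Letting $\varepsilon\to0$ gives \eqref{eq:positive-difference}.
\end{itemize}
The only real point to check here is that the Cayley-type composition stays in $\calR(K)$, which is exactly the pole-location remark above.

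\emph{Step 2 (the measure).} Take $X=\partial K$, which is compact, and let $\mathcal A=\{\Rea h|_{\partial K}:h\in\calR(K)\}$. This is a real linear subspace of $C(\partial K,\R)$ containing the constants.
\begin{itemize}
\item Define $\Lambda(\Rea h|_{\partial K})=\Rea h(q)-\beta\Rea h(p)$.
\item $\Lambda$ is well defined. If $\Rea h=0$ on $\partial K$, the maximum and minimum principles on $\Omega$ force $\Rea h=0$ on $K$, in particular at $p$ and $q$, which lie in $\Omega$ by Lemma~\ref{lem:outer-boundary}.
\item $\Lambda$ is clearly linear, and by Step~1 it is nonnegative on the nonnegative elements of $\mathcal A$.
\item Lemma~\ref{lem:positive-extension} therefore yields a positive measure $\tau$ on $\partial K$ with $\int\Rea h\dd\tau=\Rea h(q)-\beta\Rea h(p)$ for all $h\in\calR(K)$, and $\tau(\partial K)=\Lambda(1)=1-\beta$.
\end{itemize}

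\emph{Step 3 (complexification).} Apply the real identity of Step~2 to $-ih\in\calR(K)$; since $\Rea(-ih)=\Im h$, this gives the same identity for $\Im h$. Combining the two yields $\int_{\partial K}h\dd\tau=h(q)-\beta h(p)$ for every $h\in\calR(K)$.
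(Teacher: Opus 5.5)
Your proposal is correct and follows essentially the same route as the paper: a Cayley transform centered at $h(p)$ is fed into the definition of $d$ to get $\Rea h(q)\ge\frac{1-d}{1+d}\Rea h(p)$, and then Lemma~\ref{lem:positive-extension} is applied to the real parts on $\partial K$, with the imaginary parts handled via $-ih$. The only difference is cosmetic: your $\varepsilon$-shift $h+\varepsilon$ replaces the paper's separate treatment of the case $\Rea h(p)=0$.
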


\begin{proof}
Since $\Rea h\ge0$ on $\partial K$, the maximum principle
\cite[p.~7, Corollary~1.9]{ABR01} gives
$\Rea h\ge0$ throughout $K$.
Suppose first that $\Rea h(p)>0$ and set
\[
 \phi(w)=\frac{h(w)-h(p)}
              {h(w)+\overline{h(p)}}.
\]
Because $h(K)$ lies in the closed right half-plane, the denominator
does not vanish on $K$. Hence $\phi\in\calR(K)$ and
$\norm{\phi}_K\le1$. Moreover, $\phi(p)=0$. Thus, by
\eqref{eq:evaluation-distance},
\[
 |\phi(q)|\le d.
\]
Solving this inequality for the real part of $h(q)$ gives the standard
Cayley-transform estimate
\[
 \frac{\Rea h(q)}{\Rea h(p)}
 \ge \frac{1-d}{1+d}
 >\beta,
\]
which proves \eqref{eq:positive-difference} in this case. If
$\Rea h(p)=0$, the same inequality follows immediately from
$\Rea h(q)\ge0$.

Now consider the real linear space
\[
 \mathcal A
  =\{\Rea h|_{\partial K}:h\in\calR(K)\}
  \subset C(\partial K,\R),
\]
and define
\[
\Lambda_\beta(\Rea h|_{\partial K})
  =\Rea h(q)-\beta\Rea h(p).
\]
This is well defined: if $\Rea h=0$ on $\partial K$, then the maximum
principle,
applied to both $\Rea h$ and $-\Rea h$, gives
$\Rea h=0$ on $K$, and in particular at $p$ and $q$.
By \eqref{eq:positive-difference}, the functional $\Lambda_\beta$ is positive, and
\[
 \Lambda_\beta(1)=1-\beta.
\]
Lemma~\ref{lem:positive-extension} therefore yields a positive
measure
$\tau$ on $\partial K$ such that
\[
\int_{\partial K} \Rea h\,\dd\tau
   =\Rea\bigl(h(q)-\beta h(p)\bigr),
 \qquad h\in\calR(K).
\]
Applying the same identity to $-ih$ gives the corresponding equality
for the imaginary parts. Hence
\[
\int_{\partial K} h\,\dd\tau=h(q)-\beta h(p),
\]
as required.
\end{proof}

A positive representing measure need not assign positive mass to any
particular boundary circle. The next lemma shows how to construct one
that carries a definite amount of normalized arclength measure on each
basic hole, without imposing any global regularity assumption on the
infinitely perforated boundary.

\begin{lem}\label{lem:boundary-reservoir}
There is a positive probability measure $\omega$ on $\partial K$
representing evaluation at $p$ on $\calR(K)$, and constants
$\varepsilon_k>0$, such that
\begin{equation}\label{eq:boundary-reservoir}
 \int_{\partial K} h\dd\omega=h(p),\qquad
 \omega\ge\sum_{k=1}^L\varepsilon_k\lambda_k,
\end{equation}
where $\lambda_k$ is normalized arclength measure on $\partial H_k$:
\[
 d\lambda_k=\frac{1}{2\pi t_k}\,ds\big|_{\partial H_k},
 \qquad 1\leq k\leq L.
\]
\end{lem}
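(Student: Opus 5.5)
The plan is to imitate the proof of Lemma~\ref{lem:cayley-domination}. We build a positive functional on $\mathcal A=\{\Rea h|_{\partial K}:h\in\calR(K)\}$ whose representing measure, after adding back a fixed multiple of the arclength measures $\lambda_k$, represents evaluation at $p$. Concretely, for constants $\varepsilon_k>0$ to be chosen, define
\[
 \Lambda(\Rea h|_{\partial K})=\Rea h(p)-\sum_{k=1}^L\varepsilon_k\int_{\partial H_k}\Rea h\,\dd\lambda_k .
\]
This is well defined for the same reason as in Lemma~\ref{lem:cayley-domination}: if $\Rea h=0$ on $\partial K$, the maximum principle gives $\Rea h\equiv0$ on $K$, and each $\partial H_k\subset\partial K$. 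If $\Lambda$ is positive and $\Lambda(1)=1-\sum\varepsilon_k>0$, then Lemma~\ref{lem:positive-extension} gives a positive measure $\nu$ on $\partial K$ of mass $1-\sum\varepsilon_k$ representing $\Lambda$. We then put $\omega=\nu+\sum_k\varepsilon_k\lambda_k$. This is a probability measure satisfying $\int\Rea h\,\dd\omega=\Rea h(p)$. Applying this identity to $-ih$ gives the imaginary parts, hence $\int h\,\dd\omega=h(p)$, and the lower bound in \eqref{eq:boundary-reservoir} holds by construction.

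The key step is positivity. Let $h\in\calR(K)$ with $u=\Rea h\ge0$ on $\partial K$. By the maximum principle, $u\ge0$ on $K$, and $u$ is harmonic on $\Omega$ and continuous up to each circle $\partial H_k$. We want $\int u\,\dd\lambda_k\le C_k\,u(p)$ with $C_k$ independent of $h$.

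First, use the annular structure. By Lemma~\ref{lem:disk-model} there is a common enlargement factor $s>1$ such that the annulus $A_k=\{t_k<|w-b_k|<st_k\}$ lies in $\Omega$ (shrinking $s$ if necessary so that it also avoids $\Gamma$ and the origin). The circular mean $M_k(r)$ of $u$ over $|w-b_k|=r$ is affine in $\log r$ on $[t_k,st_k]$, continuous up to $r=t_k$, and nonnegative there. Fix $1<s'<s$. An affine function that is nonnegative on an interval satisfies, at an endpoint, $M_k(t_k)\le \frac{\log s}{\log s-\log s'}\,M_k(s't_k)$.

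Second, the circle $|w-b_k|=s't_k$ is a compact subset of the connected domain $\Omega$, which also contains $p$. Harnack's inequality therefore gives $M_k(s't_k)\le H_k\,u(p)$ for a constant $H_k$ depending only on $\Omega$, the circle and $p$.

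Combining the two bounds gives $\int u\,\dd\lambda_k=M_k(t_k)\le C_k u(p)$. Choosing $\varepsilon_k=1/(2LC_k)$ (and also $\le 1/(2L)$) yields $\Lambda(u)\ge \tfrac12 u(p)\ge0$ and $\Lambda(1)\ge 1/2>0$.

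I expect the main obstacle to be exactly this domination of boundary averages on $\partial H_k$ by the interior value $u(p)$. Harnack's inequality alone does not reach the boundary circle, where $u$ could a priori concentrate. The log-affine behaviour of circular means in the collar $A_k$ is what transfers control from an interior circle to $\partial H_k$, and it is the only point where the disjoint enlargement provided by Lemma~\ref{lem:disk-model} is used.
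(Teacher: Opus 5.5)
Your proposal is correct and follows essentially the same route as the paper. You use the same functional $u(p)-\sum_k\varepsilon_k\int u\,\dd\lambda_k$ on $\{\Rea h|_{\partial K}\}$, prove positivity through a collar annulus around $\partial H_k$ combined with Harnack's inequality in the connected domain $\Omega$, extend by Lemma~\ref{lem:positive-extension}, and set $\omega=\omega_0+\sum_k\varepsilon_k\lambda_k$.

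The only difference is the collar step. The paper observes that the circular mean of $u=\Rea h$ is independent of the radius, because the Laurent expansion of the single-valued holomorphic $h$ produces no $\log r$ term, so $\int u\,\dd\lambda_k$ equals the mean on an interior circle directly. Your log-affine endpoint bound is valid and reaches the same conclusion, at the cost of the harmless factor $\log s/(\log s-\log s')$.
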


\begin{proof}
Since the boundary circle
$\partial H_k$ is isolated from the remaining holes, there exists
$\delta_k>0$ such that
\[
 t_k<|w-b_k|<t_k+\delta_k
\]
is contained in $\Omega$. Let $h\in\calR(K)$ and set
$u=\Rea h$, assuming that $u\ge0$ on $\partial K$. By the maximum principle,
$u\ge0$ on $K$.

The circular mean of $h$ on the above annulus is independent of the
radius, by the Laurent expansion of $h$ there. Hence, by continuity up
to the inner boundary,
\[
 \int u\,\dd\lambda_k
 =\frac1{2\pi}\int_0^{2\pi}
   u\bigl(b_k+(t_k+\delta_k/2)e^{i\theta}\bigr)\,\dd\theta.
\]
Harnack's inequality \cite[p.~48, Theorem~3.6]{ABR01}, applied in the
connected domain $\Omega$ to the point $p$ and the compact circle
\[
 |w-b_k|=t_k+\delta_k/2,
\]
gives a constant $C_k>0$, independent of $h$, such that
\begin{equation}\label{eq:harnack-circle-estimate}
 0\le \int u\,\dd\lambda_k\le C_k\,u(p).
\end{equation} If $u(p)=0$, then $u\equiv0$ on
$\Omega$ by the minimum principle, so the same inequality remains valid.

Choose $\varepsilon_k>0$ so that
\[
 \sum_{k=1}^L \varepsilon_k C_k\le \frac12.
\]
On the real linear space
$
 \mathcal A=\{\Rea h|_{\partial K}:h\in\calR(K)\},
$
define
$
  \Lambda_{\mathrm{res}}(u|_{\partial K})
   =u(p)-\sum_{k=1}^L\varepsilon_k\int u\,\dd\lambda_k.
$
This functional is well defined. Indeed, if
\(u=\Rea h=0\) on \(\partial K\), then the maximum principle applied
to \(u\) and \(-u\) gives \(u\equiv0\) on \(K\). Hence \(u(p)=0\) and
\(\int u\,\dd\lambda_k=0\) for every \(k\).
By \eqref{eq:harnack-circle-estimate} and the choice of the
$\varepsilon_k$ above,
$$
 \Lambda_{\mathrm{res}}(u|_{\partial K})
 \ge
 \left(1-\sum_{k=1}^L\varepsilon_k C_k\right)u(p)
 \ge \frac12 u(p)\ge0.
$$
Thus $\Lambda_{\mathrm{res}}$ is positive.
 By
Lemma~\ref{lem:positive-extension}, there is a positive measure
$\omega_0$ on $\partial K$ representing $\Lambda_{\mathrm{res}}$. Setting
\[
 \omega=\omega_0+\sum_{k=1}^L\varepsilon_k\lambda_k,
\]
we obtain, for every $h\in\calR(K)$,
\[
 \int_{\partial K} \Rea h\,\dd\omega=\Rea h(p).
\]
Applying the same identity to $-ih$ gives the corresponding equality
for the imaginary parts. Hence
\[
 \int_{\partial K} h\,\dd\omega=h(p).
\]
Moreover,
\[
 \omega\ge\sum_{k=1}^L\varepsilon_k\lambda_k.
\]
Finally, testing with the constant function $1$ gives
$\omega(\partial K)=1$.
\end{proof}

Combining the two positive measures yields the difference of point
evaluations needed for the telescoping dilation argument.

\begin{prop}\label{prop:difference-measure}
There is a positive measure $\sigma$ supported on $\partial K$ such that
\begin{equation}\label{eq:sigma-identity}
 \int_{\partial K} h(w)\dd\sigma(w)=h(q)-\alpha h(p)
 \quad(h\in\calR(K)),\qquad \sigma(\partial K)=1-\alpha.
\end{equation}
For every basic hole $H_k$ there is a constant $\kappa_k>0$ with
\begin{equation}\label{eq:basic-circle-mass}
 \sigma|_{\partial H_k}\ge \kappa_k\,ds.
\end{equation}
\end{prop}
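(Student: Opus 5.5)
The plan is to combine the two measures already constructed: the measure $\tau$ from Lemma~\ref{lem:cayley-domination}, which represents $h\mapsto h(q)-\beta h(p)$, and the probability measure $\omega$ from Lemma~\ref{lem:boundary-reservoir}, which represents evaluation at $p$. Since $\beta>\alpha$ by \eqref{eq:alpha-choice}, we set
\[
 \sigma=\tau+(\beta-\alpha)\,\omega .
\]
This is a sum of positive measures on $\partial K$ with coefficient $\beta-\alpha>0$, so $\sigma$ is positive and supported on $\partial K$. For every $h\in\calR(K)$ linearity gives
\[
 \int_{\partial K}h\dd\sigma=h(q)-\beta h(p)+(\beta-\alpha)h(p)=h(q)-\alpha h(p),
\]
which is \eqref{eq:sigma-identity}. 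For the total mass, test with $h\equiv1$, or add $\tau(\partial K)=1-\beta$ and $(\beta-\alpha)\omega(\partial K)=\beta-\alpha$; both give $\sigma(\partial K)=1-\alpha$.

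For \eqref{eq:basic-circle-mass}, positivity of $\tau$ and the reservoir bound \eqref{eq:boundary-reservoir} give
\[
 \sigma\ge(\beta-\alpha)\omega\ge(\beta-\alpha)\sum_{j=1}^L\varepsilon_j\lambda_j .
\]
Restrict this to $\partial H_k$. Every term is a positive measure, so dropping the terms with $j\neq k$ keeps the inequality, and we obtain
\[
 \sigma|_{\partial H_k}\ge(\beta-\alpha)\varepsilon_k\lambda_k=\frac{(\beta-\alpha)\varepsilon_k}{2\pi t_k}\,ds\big|_{\partial H_k}.
\]
We may therefore take $\kappa_k=(\beta-\alpha)\varepsilon_k/(2\pi t_k)>0$.

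The one point to check is that the circles $\partial H_k$ genuinely lie in $\partial K$ and are disjoint from one another. This holds because the basic disks lie in $B(0,0.9R_N)\subset\Omega_{\mathrm{out}}$ by the proof of Lemma~\ref{lem:outer-boundary}, and their closures are pairwise disjoint by Lemma~\ref{lem:disk-model}. So there is no real obstacle: all the substance sits in the two preceding lemmas, and the strict inequality $\beta>\alpha$ is what leaves room to add a positive multiple of $\omega$.
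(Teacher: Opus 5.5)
Your proposal is correct and is essentially the paper's own proof: the same choice $\sigma=\tau+(\beta-\alpha)\omega$, the same linearity and mass computations, and the same constant $\kappa_k=(\beta-\alpha)\varepsilon_k/(2\pi t_k)$. Your final check that the circles $\partial H_k$ are disjoint is harmless but not needed for the lower bound, since dropping the positive terms $\varepsilon_j\lambda_j$ with $j\neq k$ already gives it.
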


\begin{proof}
Set
$
 \sigma=\tau+(\beta-\alpha)\omega.
$
By Lemmas~\ref{lem:cayley-domination} and
\ref{lem:boundary-reservoir}, we have, for every
$h\in\calR(K)$,
\[
 \int_{\partial K} h\,\dd\sigma
   =h(q)-\beta h(p)+(\beta-\alpha)h(p)
   =h(q)-\alpha h(p).
\]
Also,
$
 \sigma(\partial K)=(1-\beta)+(\beta-\alpha)=1-\alpha.
$
Finally, since $\beta>\alpha$ and, by \eqref{eq:boundary-reservoir},
$
 \omega\ge \sum_{k=1}^L \varepsilon_k\lambda_k,
$
the restriction of $\sigma$ to each basic circle satisfies
\[
 \sigma|_{\partial H_k}
 \ge (\beta-\alpha)\varepsilon_k\lambda_k.
\]
Thus \eqref{eq:basic-circle-mass} holds with
\[
 \kappa_k=\frac{(\beta-\alpha)\varepsilon_k}{2\pi t_k}>0.
\]
\end{proof}

\section{A self-similar logarithmic potential}\label{sec:potential}

We sum the inverted boundary measure over all integer scales. The
resulting measure is invariant under the map $z\mapsto2z$, and its Cauchy
transform telescopes on the prescribed disks.

Let $I:\C\setminus\{0\}\to\C\setminus\{0\}$ be the inversion
$I(w)=1/w$. Since $H_k=B(b_k,t_k)$ and $t_k<|b_k|$, its image is
\[
 D_k=I(H_k)=B(a_k,s_k),\qquad
 a_k=\frac{\overline{b_k}}{|b_k|^2-t_k^2},\qquad
 s_k=\frac{t_k}{|b_k|^2-t_k^2}.
\]
For $j\in\Z$ and $1\leq k\leq L$, define
\[
 a_{j,k}=2^ja_k,\qquad R_{j,k}=2^js_k,\qquad
 D_{j,k}=2^jD_k=B(a_{j,k},R_{j,k}).
\]
In particular, $D_{0,k}=D_k$. These disks have pairwise disjoint
closures and lie in a fixed right-half-plane sector.

\begin{prop}\label{prop:power-potential}
Let $\sigma$ be the measure from
Proposition~\ref{prop:difference-measure}, and put
\[
 \sigma_0=\sigma-\sigma(\{0\})\delta_0,\qquad \nu=I_*\sigma_0.
\]
The push-forward is taken on $\C\setminus\{0\}$, where $\sigma_0$
carries all its mass.
With $\alpha$ as in \eqref{eq:alpha-choice}, define
\begin{equation}\label{eq:bilateral-measure}
 \mu=\sum_{n\in\Z}\alpha^{-n-1}(2^n)_*\nu,
 \qquad \alpha=2^{-\rho},
\end{equation}
where $(2^n)_*$ denotes push-forward by $\zeta\mapsto2^n\zeta$.
Then $\mu$ is a nonzero positive Radon measure on $\C$ and
\begin{equation}\label{eq:mu-growth}
 \mu(\{0\})=0,\qquad
 n_\mu(r):=\mu(\{|z|\le r\})\le Cr^\rho\quad(r>0).
\end{equation}
For every Borel set $\mathcal B\subset\C$,
\begin{equation}\label{eq:measure-similarity}
 \mu(2\mathcal B)=2^\rho\mu(\mathcal B).
\end{equation}
The normalized logarithmic potential
\begin{equation}\label{eq:U-definition}
 U(z)=\int_\C\log\left|1-\frac z\zeta\right|\dd\mu(\zeta)
\end{equation}
is nonconstant and subharmonic, has Riesz measure $\mu$, and satisfies
\begin{equation}\label{eq:U-similarity}
 U(0)=0,\qquad U(2z)=2^\rho U(z)\quad(z\in\C).
\end{equation}
Moreover, there is a constant $C_U>0$ such that
\begin{equation}\label{eq:U-upper}
 U(z)\le C_U|z|^\rho,
\end{equation}
and there are real constants $c_k$, the values of $U$ on the basic
disks $D_k$, such that
\begin{equation}\label{eq:flat-values}
 U\equiv c_{j,k}:=2^{j\rho}c_k\quad\hbox{on }D_{j,k}
 \qquad(j\in\Z,\ 1\le k\le L).
\end{equation}
\end{prop}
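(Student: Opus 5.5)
The plan is to get all structural properties of $\mu$ and $U$ from the dyadic structure of \eqref{eq:bilateral-measure}. Flatness \eqref{eq:flat-values} will then follow by computing $\p U$ on the disks $D_{j,k}$ with the identity \eqref{eq:sigma-identity} and checking that the resulting series telescopes to zero.

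\emph{Measure-theoretic part.} By \eqref{eq:outer-sandwich}, $\sigma_0$ lives on $\{0<|w|\le 1.1R_N\}$. Hence $\nu=I_*\sigma_0$ is a positive measure of mass at most $1-\alpha$ carried by $\{|\zeta|\ge r_0\}$ with $r_0=1/(1.1R_N)$; its support may be unbounded. Consequently $(2^n)_*\nu$ has no mass in $\{|z|<2^nr_0\}$. Therefore
\[
n_\mu(r)\le \sum_{n:\,2^nr_0\le r}\alpha^{-n-1}(1-\alpha).
\]
This is a geometric sum in $\alpha^{-1}=2^\rho>1$, dominated by its largest term, so $n_\mu(r)\le Cr^\rho$. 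This gives local finiteness, $\mu(\{0\})=0$ and \eqref{eq:mu-growth}. The measure $\mu$ is nonzero because $\sigma(\partial K)=1-\alpha>0$ and $\sigma_0\ne0$: $\sigma_0$ dominates the arclength masses from \eqref{eq:basic-circle-mass}. Reindexing $n\mapsto n+1$ in \eqref{eq:bilateral-measure} gives $(2)_*\mu=\alpha\mu$, which is \eqref{eq:measure-similarity}.

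\emph{The potential $U$.} Since $\rho<1$, \eqref{eq:mu-growth} gives $\int_{|\zeta|>1}|\zeta|^{-1}\,d\mu<\infty$, and $\log$ is integrable against $\mu$ near $0$. Hence \eqref{eq:U-definition} converges locally uniformly off compact sets in the usual sense. It defines a subharmonic function with Riesz measure $\mu$ (genus-zero canonical potential), $U(0)=0$, and $U\not\equiv$ const since $\mu\neq0$. Substituting $\zeta=2\zeta'$ and using \eqref{eq:measure-similarity} gives $U(2z)=2^\rho U(z)$. The bound \eqref{eq:U-upper} follows from $\log|1-z/\zeta|\le\log(1+|z|/|\zeta|)$ and integration by parts against $n_\mu$:
\[
U(z)\le\int_0^\infty\frac{|z|\,n_\mu(t)}{t(t+|z|)}\,dt\le C|z|^\rho ,
\]
the integral converging because $0<\rho<1$.

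\emph{Flatness (main step).} Fix $z\in D_{j,k}$. Then $1/z\in 2^{-j}H_k$, and for every $n$ the point $2^n/z$ lies in some hole $2^{j'}H_k$, possibly outside $\Omega_{\mathrm{out}}$, hence outside $K$. Because $\mu$ does not charge $\overline{D_{j,k}}$, I would differentiate under the integral sign; the justification is the locally uniform convergence below. This gives
\[
2\p U(z)=\int\frac{d\mu(\zeta)}{z-\zeta}=\sum_n\alpha^{-n-1}2^{-n}\Phi(2^{-n}z),\qquad \Phi(x)=\int\frac{d\nu(\zeta)}{x-\zeta}=\int\frac{d\sigma_0(w)}{x-1/w}.
\]
For $x=2^{-n}z$ the function $g_x(w)=w/(xw-1)$ has its only pole at $1/x\notin K$, so $g_x\in\calR(K)$. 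Moreover $g_x(0)=0$, so integrating against $\sigma_0$ or $\sigma$ gives the same result. Now \eqref{eq:sigma-identity} yields
\[
\Phi(x)=g_x(q)-\alpha g_x(p)=\frac1{x+2}-\frac{\alpha}{x+1}.
\]
Hence the $n$-th term of the series equals
\[
\frac{\alpha^{-n-1}}{z+2^{n+1}}-\frac{\alpha^{-n}}{z+2^n}.
\]
This is a telescoping series $\sum_n(c_{n+1}-c_n)$ with $c_n=\alpha^{-n}/(z+2^n)$.
\begin{itemize}
\item As $n\to+\infty$, $c_n\approx 2^{n(\rho-1)}\to0$.
\item As $n\to-\infty$, $c_n\approx 2^{n\rho}/z\to0$.
\end{itemize}
So the sum vanishes, and this happens locally uniformly in $z$, which also justifies the termwise differentiation. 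Thus $\p U=0$ on the disk $D_{j,k}$. A real harmonic function with vanishing $\p$-derivative is constant, so $U\equiv c_{j,k}$ there. Setting $c_k=c_{0,k}$, the similarity $U(2z)=2^\rho U(z)$ gives $c_{j,k}=2^{j\rho}c_k$.

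I expect the main obstacle to be the rigorous interchange of the $n$-sum, the integral over $\nu$ (whose support is unbounded) and differentiation. The first approach is to bound $\sum_n\alpha^{-n-1}\int|x_n-\zeta|^{-1}\,d(2^n)_*\nu$ uniformly on compact subsets of $D_{j,k}$. Two facts should suffice: $\dist(D_{j,k}(t),\supp\mu)>0$ by the disjointness in Lemma~\ref{lem:disk-model}, and $\int|\zeta|^{-1}\,d\mu<\infty$ away from $0$ by \eqref{eq:mu-growth}.
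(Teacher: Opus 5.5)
Your proposal is correct and follows essentially the same route as the paper. The steps match: the geometric-sum bound for $n_\mu$, reindexing for \eqref{eq:measure-similarity}, the test functions $w/(1-2^{-n}zw)\in\calR(K)$ inserted into \eqref{eq:sigma-identity}, the same telescoping sum, and absolute, locally uniform convergence of the Cauchy integral on compact subsets of $D_{j,k}$ to justify the interchanges.

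One small slip: $\mu$ does charge $\partial D_{j,k}$, in fact with arclength density (cf.\ \eqref{eq:circle-density}). The correct fact is only that $\mu$ vanishes on the open disk: $(2^n)_*\nu(D_{j,k})=\sigma_0(2^{n-j}H_k)=0$ because $\partial K$ misses every open hole. This weaker fact is all your argument on compact subdisks actually uses.
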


\begin{proof}
Write $R_K=\max_{w\in K}|w|$. The measure $\nu$ is finite and
supported in $\{|\zeta|\ge R_K^{-1}\}$. Consequently, for every $r>0$,
\[
 n_\mu(r)\le \nu(\C)
    \sum_{n\le\log_2(R_Kr)}\alpha^{-n-1}\le Cr^\rho.
\]
Here convergence at the negative end uses $\rho>0$. Thus $\mu$ is
locally finite, and $\mu(\{0\})=0$. It is nonzero by
\eqref{eq:basic-circle-mass}. Reindexing the positive series
\eqref{eq:bilateral-measure} gives
$(2)_*\mu=\alpha\mu$, which is equivalent to
\eqref{eq:measure-similarity}.

The growth estimate implies
\[
 \int_{0<|\zeta|<1}\log\frac1{|\zeta|}\,\dd\mu(\zeta)<\infty,
 \qquad
 \int_{|\zeta|>1}\frac{\dd\mu(\zeta)}{|\zeta|}<\infty.
\]
The first assertion follows by integrating $n_\mu(t)/t$ over $(0,1)$;
the second follows by integration by parts and $\rho<1$. On a bounded
part of the measure, write the kernel as
$\log|z-\zeta|-\log|\zeta|$. Its potential is subharmonic, with a
finite normalizing constant. The tail is locally uniformly convergent
and harmonic. This defines \eqref{eq:U-definition}, gives $U(0)=0$,
and shows that $(2\pi)^{-1}\Delta U=\mu$. In particular, $U$ is
nonconstant. Changing variables in \eqref{eq:U-definition}, using
\eqref{eq:measure-similarity}, gives \eqref{eq:U-similarity} exactly.
Also, with $r=|z|>0$, integration by parts gives
\[
 U(z)\le\int_\C\log\left(1+\frac r{|\zeta|}\right)\dd\mu(\zeta)
 =r\int_0^\infty\frac{n_\mu(t)}{t(t+r)}\,\dd t
 \le C_Ur^\rho.
\]
Both boundary terms vanish by $0<\rho<1$. This proves
\eqref{eq:U-upper}.

It remains to prove flatness. Fix $z\in D_{j,k}$ and $n\in\Z$.
The function
\[
 h_{n,z}(w)=\frac{w}{1-2^{-n}zw}
\]
vanishes at $0$ and has its only pole at
\[
 \frac{2^n}{z}\in 2^{n-j}H_k\subset\C\setminus K.
\]
Thus $h_{n,z}\in\calR(K)$, even when $n<0$, and
\eqref{eq:sigma-identity} applies with $\sigma_0$ in place of $\sigma$.
Using $p=-1$ and $q=-1/2$, we obtain
\[
 \int_\C\frac{\dd((2^n)_*\nu)(\zeta)}{z-\zeta}
 =-2^{-n}\int h_{n,z}\,\dd\sigma_0
 =\frac1{z+2^{n+1}}-\frac{\alpha}{z+2^n}.
\]
For nonnegative integers $M_-$ and $M_+$, summing gives
\begin{equation}\label{eq:bilateral-telescoping}
 \sum_{n=-M_-}^{M_+}\alpha^{-n-1}
 \int_\C\frac{\dd((2^n)_*\nu)(\zeta)}{z-\zeta}
 =\frac{\alpha^{-M_+-1}}{z+2^{M_++1}}
  -\frac{\alpha^{M_-}}{z+2^{-M_-}}.
\end{equation}
The first term tends to zero as $M_+\to\infty$ because $\rho<1$,
and the second tends to zero as $M_-\to\infty$ because $\rho>0$.
These truncation indices are independent of the fixed geometric stage $N$.
Each summand measure gives zero mass to $D_{j,k}$: its inverse image
there under $w\mapsto2^n/w$ is the hole $2^{n-j}H_k$, which is
disjoint from $K$. Since \(D_{j,k}\) is open and \(\mu(D_{j,k})=0\), we have
\(D_{j,k}\cap\supp\mu=\varnothing\). Hence every compact subset of
\(D_{j,k}\) has positive distance from \(\supp\mu\). Local finiteness of $\mu$,
together with the estimate
$|z-\zeta|^{-1}\leq 2|\zeta|^{-1}$ for sufficiently large
$|\zeta|$, uniformly for $z$ in such a compact subset, and
\eqref{eq:mu-growth} with $\rho<1$, shows that the Cauchy integral
against $\mu$ converges absolutely and locally uniformly on
$D_{j,k}$. Passing to the limit in
\eqref{eq:bilateral-telescoping} yields
\[
 2\p U(z)=\int_\C\frac{\dd\mu(\zeta)}{z-\zeta}=0.
\]
Thus $U$ is constant on each $D_{j,k}$. The relation between these
constants in \eqref{eq:flat-values} follows from
\eqref{eq:U-similarity}.
\end{proof}

\begin{lem}\label{lem:uniform-geometry}
There is $\kappa>0$ such that, for every $j\in\Z$ and $1\le k\le L$,
\begin{equation}\label{eq:circle-density}
 \mu|_{\partial D_{j,k}}\ge\kappa\,2^{j(\rho-1)}\,ds.
\end{equation}
There is $q_0\in(0,1)$ such that
\begin{equation}\label{eq:radial-disk-cover}
 \calU=\bigcup_{\substack{j\in\Z\\1\leq k\leq L}}D_{j,k}(q_0)
\end{equation}
meets every circle centered at zero and satisfies $2\calU=\calU$.
For every fixed integer $J$, the subfamily with $j\ge J$ meets every
sufficiently large circle.
\end{lem}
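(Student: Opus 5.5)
The proof has two independent parts: the density bound \eqref{eq:circle-density}, obtained by isolating a single summand in \eqref{eq:bilateral-measure}, and the covering property \eqref{eq:radial-disk-cover}, obtained by compactness applied to $I(E)$.

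\textbf{Density bound.} Since $\mu$ is a positive sum, for each $j$ I keep only the summand $n=j$:
\[
\mu\ge\alpha^{-j-1}(2^j)_*\nu .
\]
Because $\partial D_{j,k}=2^jI(\partial H_k)$, it suffices to bound $\nu|_{I(\partial H_k)}$ from below.

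Removing the atom at $0$ does not affect $\partial H_k$, since $0\notin\partial H_k$. So \eqref{eq:basic-circle-mass} gives
\[
\nu|_{I(\partial H_k)}\ge\kappa_k\,I_*(ds|_{\partial H_k}).
\]
The inversion maps the circle $\partial H_k$ onto the circle $\partial D_k$, and its derivative has modulus $|w|^{-2}$. This quantity is bounded above and below on $\partial H_k$, because $\partial H_k$ stays at distance at least $|b_k|-t_k>0$ from the origin. Hence $I_*(ds|_{\partial H_k})\ge c_k\,ds|_{\partial D_k}$ for some $c_k>0$.

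Dilation by $2^j$ multiplies arclength by $2^j$, so $(2^j)_*(ds|_{\partial D_k})=2^{-j}\,ds|_{\partial D_{j,k}}$. Combining this with the coefficient $\alpha^{-j-1}=2^{\rho(j+1)}$, we obtain
\[
\mu|_{\partial D_{j,k}}\ge 2^{\rho}\kappa_kc_k\,2^{j(\rho-1)}\,ds .
\]
Taking $\kappa=2^\rho\min_k\kappa_kc_k$ gives \eqref{eq:circle-density}. The minimum is over finitely many $k$, so $\kappa>0$.

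\textbf{Covering property.} By Lemma~\ref{lem:disk-model}, $E\subset\bigcup_k H_k$, and therefore $I(E)\subset\bigcup_k D_k$. The set $I(E)$ is compact and the open sets $\bigcup_k D_k(q)$ increase to $\bigcup_kD_k$ as $q\uparrow1$. By compactness there is $q_0\in(0,1)$ with
\[
I(E)\subset\bigcup_k D_k(q_0).
\]
By \eqref{eq:radial-model}, the moduli of points of $I(E)$ fill $[e^{-\ell},1]$, and $e^{-\ell}<1/2$. So the moduli of points of $\bigcup_kD_{j,k}(q_0)$ contain $[2^je^{-\ell},2^j]\supset[2^{j-1},2^j]$.

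Taking the union over $j\in\Z$ covers $(0,\infty)$, so $\calU$ meets every circle centered at $0$. Restricting to $j\ge J$ still covers $[2^{J-1},\infty)$, which gives the final claim. Finally, $2D_{j,k}(q_0)=D_{j+1,k}(q_0)$, so $2\calU=\calU$.

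\textbf{Main obstacle.} The only genuinely delicate point is choosing $q_0$. Inverse images of concentric subdisks are not concentric, which is why the compactness argument is applied to $I(E)$ inside the $D_k$ directly rather than to $E$ inside the $H_k$. Everything else is bookkeeping of the push-forward densities.
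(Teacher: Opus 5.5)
Your proof is correct and takes essentially the paper's route. For the density bound you use one summand of the series for $\mu$ together with the bounded distortion of inversion on $\partial H_k$. For the covering you place $I(E)$ compactly inside $\bigcup_k D_k$ and use its radial projection $[e^{-\ell},1]$ with $e^{-\ell}<1/2$.

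The differences are cosmetic. You take the summand $n=j$ directly, where the paper uses the $n=0$ term and then \eqref{eq:measure-similarity}. You also show explicitly that the disks with $j\ge J$ cover $[2^{J-1},\infty)$, where the paper notes instead that the disks with $j<J$ stay bounded.
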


\begin{proof}
By \eqref{eq:basic-circle-mass} and smoothness of inversion on each
basic circle, $\nu|_{\partial D_k}$ dominates a positive multiple
of arclength. The term $n=0$ in \eqref{eq:bilateral-measure} then
gives the same assertion for $\mu|_{\partial D_k}$. Equation
\eqref{eq:measure-similarity}, together with the scaling of arclength,
gives \eqref{eq:circle-density}; take the minimum over the finitely
many $k$.

The compact set $I(E)$ is contained in the finite open union
$\bigcup_{k=1}^LD_k$. Thus some $q_0\in(0,1)$ satisfies
\[
 I(E)\subset\bigcup_{k=1}^LD_k(q_0).
\]
By \eqref{eq:radial-model}, its radial projection is $[e^{-\ell},1]$,
whose dyadic dilates cover
$(0,\infty)$ because $\ell>\log2$. This proves the asserted radial
coverage in \eqref{eq:radial-disk-cover}. Dyadic invariance is immediate
from its definition. Finally, the disks with $j<J$ form a bounded
family and cannot meet sufficiently large circles.
\end{proof}

\section{From the subharmonic function to an entire function}
\label{sec:from-subharmonic}

Throughout this section, $U$ and $\mu$ are the potential and measure
from Proposition~\ref{prop:power-potential}. Fix $\kappa>0$ and
$q_0\in(0,1)$ as in Lemma~\ref{lem:uniform-geometry}, and let
$\calU$ be the corresponding open set in \eqref{eq:radial-disk-cover}.
Then $U(0)=0$, $U(2z)=2^\rho U(z)$, and $U$ is locally constant on
$\calU$, which intersects every circle $|z|=r$, $r>0$.
For use throughout the rest of the section, put
\[
 \gamma_k=\pi\kappa s_k,\qquad
 \gamma_{j,k}=2^{j\rho}\gamma_k
 \qquad(j\in\Z,\ 1\leq k\leq L).
\]

\begin{lem}\label{lem:green-modification}
Let $U$, $\mu$, and $D_{j,k}$ be as in
Proposition~\ref{prop:power-potential} and
Lemma~\ref{lem:uniform-geometry}.
There exists a subharmonic function $U_1$ in $\C$ such that
\[
U_1(0)=0,\qquad
U_1(2z)=2^\rho U_1(z),\qquad
U_1\leq U,
\]
and
\[
U_1<U\quad\hbox{on every }D_{j,k}.
\]
\end{lem}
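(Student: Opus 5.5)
Modify $U$ on each disk $D_{j,k}$ by a Green-potential correction that sweeps some of the mass sitting on $\partial D_{j,k}$ into the interior. The key input is the boundary density \eqref{eq:circle-density}, which guarantees enough positive Riesz mass on each circle to absorb a negative correction.

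\textbf{Basic disk.} For $D_k=B(a_k,s_k)$, let $G_k(z,\zeta)$ be the Green function of $D_k$, and set
\[
 v_k(z)=-\gamma_k'\,G_k(z,a_k),
\]
extended by $0$ outside $D_k$, with a small constant $\gamma_k'>0$. Then $v_k\le0$, $v_k<0$ on $D_k$, and $v_k$ is harmonic off $\{a_k\}\cup\partial D_k$. In the distribution sense,
\[
 (2\pi)^{-1}\Delta v_k=\gamma_k'\bigl(\delta_{a_k}-\omega_k\bigr),
\]
where $\omega_k=(2\pi s_k)^{-1}ds|_{\partial D_k}$ is harmonic measure at the centre. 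So $v_k$ contributes a positive point mass $\gamma_k'$ at $a_k$ and a negative mass of density $\gamma_k'/(2\pi s_k)$ on the circle.

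Define
\[
 U_1=U+\sum_{j,k}2^{j\rho}\,v_k\bigl(2^{-j}z\bigr).
\]
The supports of the summands are the disjoint closed disks $\overline{D_{j,k}}$, so the sum is locally finite away from $0$. Its Riesz measure is
\[
 \mu+\sum_{j,k}2^{j\rho}\gamma_k'\Bigl(\delta_{a_{j,k}}-(2^j)_*\omega_k\Bigr).
\]
The scaled negative part on $\partial D_{j,k}$ has density $2^{j\rho}\gamma_k'/(2\pi\,2^js_k)$ with respect to arclength. By \eqref{eq:circle-density}, $\mu$ has density at least $\kappa\,2^{j(\rho-1)}$ there. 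Hence positivity holds as soon as $\gamma_k'\le 2\pi\kappa s_k$. Taking $\gamma_k'=\gamma_k=\pi\kappa s_k$, consistent with the notation just introduced, the Riesz measure of $U_1$ is positive, and $U_1$ is subharmonic on $\C\setminus\{0\}$.

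\textbf{Near the origin, $U_1(0)=0$, and scaling.} The correction is bounded by $\sum_{j\le J}2^{j\rho}\sup|v_k|$ only away from the centres, since $v_k$ is unbounded below at $a_k$. What is actually needed is that the correction is a nonpositive function whose upper semicontinuous regularization vanishes at $0$. On $D_{j,k}$ with $j\to-\infty$, the mean of $v$ over any circle about $0$ is controlled by $2^{j\rho}\gamma_k\times O(1)$ times the area fraction, which tends to $0$. So $U_1$ is bounded above near $0$ and subharmonic on the punctured plane; it extends across the polar set $\{0\}$, and we set $U_1(0)=\limsup_{z\to0}U_1(z)=0$. Here $U(0)=0$, $U$ is continuous at $0$ by \eqref{eq:U-upper} together with the construction, and the correction is $\le 0$ and tends to $0$ outside the small disks. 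The identity $U_1(2z)=2^\rho U_1(z)$ follows by reindexing $j$, using \eqref{eq:U-similarity}. Finally $U_1\le U$ because every $v_k\le0$, and $U_1<U$ on each $D_{j,k}$ because $G_k>0$ inside.

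\textbf{Main obstacle.} The delicate step is the local finiteness and subharmonicity check at the origin, and more precisely the verification that the total measure stays locally finite near $0$. This is handled by the $2^{j\rho}$ scaling with $\rho>0$: the added masses are bounded by $\sum_{j\le J}2^{j\rho}\gamma_k<\infty$, mirroring the estimate \eqref{eq:mu-growth}. The distributional computation of $\Delta v_k$ across $\partial D_k$ is standard; the jump of the normal derivative of the Green function is harmonic measure.
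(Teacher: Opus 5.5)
Your construction is the paper's. The Green function of $D_k$ with pole at its centre is $\log(s_k/|z-a_k|)$, so $2^{j\rho}v_k(2^{-j}z)=\gamma_{j,k}G_{j,k}(z)$. Your positivity check with $\gamma_k=\pi\kappa s_k$, the reindexing that gives $U_1(2z)=2^\rho U_1(z)$, and the strict inequality on $D_{j,k}$ all coincide with the paper's proof.

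The one step that does not hold up is the value at the origin. You get $\limsup_{z\to0}U_1(z)=0$ from the claim that $U$ is continuous at $0$ ``by \eqref{eq:U-upper} together with the construction''. But \eqref{eq:U-upper} is only an upper bound. By the scaling $U(2^{-n}z)=2^{-n\rho}U(z)$, continuity of $U$ at $0$ is equivalent to $U$ being bounded below on the annulus $1\le|z|\le2$. Nothing in the construction provides such a bound: $\mu$ is built from representing measures produced by Hahn--Banach, with no control on their regularity.

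The upper half of the claim is immediate from $U_1\le U\le C_U|z|^\rho$. This bound, not your circle-mean remark, is what makes $U_1$ bounded above near $0$, and that is all the removability of $\{0\}$ needs. For the same reason, local finiteness of the Riesz mass at $0$ is not the real obstacle; the lower bound on the $\limsup$ is.

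For $\limsup_{z\to0}U_1(z)\ge0$, argue as the paper does:
\begin{enumerate}
\item Choose $z_*$ in the left half-plane with $U(z_*)>-\infty$. This is possible because $\{U=-\infty\}$ is polar.
\item All the disks lie in the right half-plane, so the correction vanishes at every point $2^{-n}z_*$.
\item Hence $U_1(2^{-n}z_*)=2^{-n\rho}U(z_*)\to0$.
\end{enumerate}

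Alternatively, your circle-mean observation works if you combine it with the sub-mean-value inequality $0=U(0)\le\frac1{2\pi}\int_0^{2\pi}U(re^{i\theta})\,d\theta$. Only finitely many disks meet $1\le|z|\le2$, and logarithmic singularities are integrable on circles. Together with the scaling of the correction, this shows its circle means are at least $-Cr^\rho$, so $\max_{|z|=r}U_1\ge-Cr^\rho$.

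With this repair your proof is complete.
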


\begin{proof}
Using the centers $a_{j,k}$ and radii $R_{j,k}$ fixed in
Section~\ref{sec:potential}, define
\[
G_{j,k}(z)=
\begin{cases}
\log\bigl(|z-a_{j,k}|/R_{j,k}\bigr),
   & |z-a_{j,k}|<R_{j,k},\\
0, & |z-a_{j,k}|\geq R_{j,k}.
\end{cases}
\]
In the sense of distributions,
\[
\frac{1}{2\pi}\Delta G_{j,k}
=
\delta_{a_{j,k}}
-
\frac{1}{2\pi R_{j,k}}\,
ds\big|_{\partial D_{j,k}}.
\]

With the coefficients $\gamma_{j,k}$ fixed above, define, for $z\neq0$,
\[
U_1(z)=U(z)+
\sum_{j\in\Z}\sum_{k=1}^L
\gamma_{j,k}G_{j,k}(z).
\]
The sum is locally finite on $\C\setminus\{0\}$.
Moreover,
\[
\frac{\gamma_{j,k}}{2\pi R_{j,k}}
=
\frac{\kappa}{2}\,2^{j(\rho-1)}.
\]
Consequently, \eqref{eq:circle-density} implies that
\[
\mu+
\sum_{j\in\Z}\sum_{k=1}^L\gamma_{j,k}\delta_{a_{j,k}}
-
\sum_{j\in\Z}\sum_{k=1}^L
\frac{\gamma_{j,k}}{2\pi R_{j,k}}\,
ds\big|_{\partial D_{j,k}}
\]
is a positive measure on $\C\setminus\{0\}$.
Thus $U_1$ is subharmonic there.

Since $G_{j,k}\leq0$, we have
\[
U_1(z)\leq U(z)\leq C_U|z|^\rho.
\]
Hence $U_1$ has a subharmonic extension across $0$.
Choose a point $z_*$ in the left half-plane at which $U$ is finite.
All the disks lie in the right half-plane, and therefore
\[
U_1(2^{-n}z_*)
=
U(2^{-n}z_*)
=
2^{-n\rho}U(z_*)\longrightarrow0.
\]
It follows that the extended value is $U_1(0)=0$.

Finally,
\[
G_{j+1,k}(2z)=G_{j,k}(z),
\qquad
\gamma_{j+1,k}=2^\rho\gamma_{j,k},
\]
so $U_1(2z)=2^\rho U_1(z)$.
The strict inequality on each $D_{j,k}$ follows from
$G_{j,k}<0$ there and $\gamma_{j,k}>0$.
\end{proof}

\begin{prop}
\label{prop:subharmonic-to-entire}
For the potential $U$ and the open set $\calU$ constructed above,
there exists an entire function $F$ such that
\begin{equation}
\label{1}
c_Tr^\rho\leq T(r,F)\leq C_Tr^\rho
\end{equation}
for some constants $c_T,C_T>0$ and all sufficiently large $r$, and
\begin{equation}
\label{2}
\delta(0,F'/F)>0.
\end{equation}
\end{prop}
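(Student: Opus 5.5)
The plan is to realize $U$, up to an error of size $\varepsilon r^\rho$, as $\log|F|$ for an entire $F$ that is nearly constant on the shrunken disks $D_{j,k}(q_0)$. The function $U_1$ of Lemma~\ref{lem:green-modification} serves as a weight to make the correction term exponentially small there. On $D_{j,k}$ we have
\[
 U-U_1=\gamma_{j,k}\log\frac{R_{j,k}}{|z-a_{j,k}|},
\]
and on $D_{j,k}(q_0)$ this is $\ge \gamma_k\log(1/q_0)\,2^{j\rho}\asymp |z|^\rho$. Near $\partial D_{j,k}$, say on $D_{j,k}\setminus D_{j,k}(q_1)$ with $q_0<q_1<1$ close to $1$, it is at most $\gamma_{j,k}\log(1/q_1)$, which is a small multiple of $|z|^\rho$.

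\textbf{Step 1 (global model).} First I produce an entire $G$ with $\log|G(z)|=U(z)+O(\log|z|)$ outside a union of exceptional disks of finite upper density, and $\log|G|\le U+O(\log|z|)$ everywhere. This is a Yulmukhamedov / Lyubarskii--Malinnikova type approximation: discretize $\mu$ into integer point masses on dyadic cells, using $n_\mu(r)\le Cr^\rho$ from \eqref{eq:mu-growth}. Since $\mu$ does not charge $D_{j,k}$, the zeros of $G$ can be kept outside $\bigcup D_{j,k}(q_1)$ and the exceptional disks can be made to avoid those sets.

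\textbf{Step 2 (gluing with a weighted $\bar\partial$ correction).} Let $\chi_{j,k}$ be cutoffs equal to $1$ on $D_{j,k}(q_1)$, supported in $D_{j,k}(q_2)$ with $q_1<q_2<1$, and with $|\bar\partial\chi_{j,k}|\lesssim 1/R_{j,k}$. Since $U\equiv c_{j,k}$ on $D_{j,k}$, the function $G$ is close in modulus to $e^{c_{j,k}}$ there. I set
\[
 F_0=\Bigl(1-\sum\chi_{j,k}\Bigr)G+\sum\chi_{j,k}\,e^{c_{j,k}+i\theta_{j,k}},
\]
with phases $\theta_{j,k}$ chosen so that the bracket $G-e^{c_{j,k}+i\theta_{j,k}}$ has size $e^{U+O(\log r)}$ on the transition annulus. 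Then $\bar\partial F_0$ is supported in the annuli $D_{j,k}(q_2)\setminus D_{j,k}(q_1)$. I solve $\bar\partial v=\bar\partial F_0$ by H\"ormander's $L^2$ estimate with the subharmonic weight
\[
 \varphi=2U_1+2A|z|^\rho+3\log(1+|z|^2).
\]
Note that $|z|^\rho$ is subharmonic. Choosing $A$ larger than $\sup(U-U_1)/|z|^\rho$ on the transition annuli (which is $\lesssim \log(1/q_1)$) makes $\int|\bar\partial F_0|^2e^{-\varphi}$ finite. Put $F=F_0-v$.

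By the sub-mean-value property for the holomorphic $v$ on $D_{j,k}(q_1)$, on $D_{j,k}(q_0)$ both $|v|$ and $|v'|$ are at most
\[
 \exp\bigl(U_1+A|z|^\rho+O(\log|z|)\bigr).
\]
Since $F=e^{c_{j,k}+i\theta}-v$ there, we get $|F|\ge \tfrac12 e^{U}$ and
\[
 \Bigl|\frac{F'}{F}\Bigr|\le \exp\bigl(-(\gamma_k\log(1/q_0)\,2^{j\rho}-A|z|^\rho)+O(\log|z|)\bigr)\le e^{-b|z|^\rho}.
\]
This holds provided $q_1$ is so close to $1$ that $A\ll\log(1/q_0)\min_k\gamma_k/\sup_{D_k}|z|^\rho$. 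Making these two requirements on $A$, $q_0$, $q_1$ compatible is the main obstacle. The structure is favourable, because $A$ depends only on $\log(1/q_1)\to0$ while the gain depends on $\log(1/q_0)$, but the constants must be tracked uniformly in $j$ via the dilation invariance $U_1(2z)=2^\rho U_1(z)$.

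\textbf{Step 3 (conclusion).} Outside the disks, $\log|F|\le U+A|z|^\rho+O(\log|z|)\le C|z|^\rho$ by \eqref{eq:U-upper}, giving the upper bound in \eqref{1}. By Lemma~\ref{lem:uniform-geometry}, every circle $|z|=r$ meets $\calU$ in an arc of angular measure $\ge c>0$, since the disks have radius comparable to their distance from $0$. Hence
\[
 m(r,F/F')\ge c\,b\,r^\rho .
\]
By the first main theorem and Lemma~\ref{lem:characteristic},
\[
 m(r,F/F')\le T(r,F'/F)+O(1)\le 2\log M(2r,F)+O(\log r).
\]
This yields the lower bound $T(r,F)\ge c_Tr^\rho$, using $T(r,F)\ge\frac13\log M(r/2,F)$ (because $\rho<1$, only the standard comparison is needed). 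It also yields
\[
 \delta(0,F'/F)\ge\liminf_{r\to\infty}\frac{m(r,F/F')}{T(r,F'/F)}\ge\frac{cb}{2C\,2^\rho}>0,
\]
which proves \eqref{2}.
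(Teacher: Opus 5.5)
Your Steps 2--3 work, modulo the points in the second paragraph. They follow the paper's overall scheme: a weighted $\db$ problem with weight built from $U_1$, cutoffs on the flat disks, an angular-measure lower bound for $m(r,F/F')$, then Lemma~\ref{lem:characteristic}. Your normalization is less economical, though.

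The paper uses no global model. It takes $F_0=\sum_{j\ge0,k}A_{j,k}\chi_{j,k}$ with the plateau lowered to $A_{j,k}=\exp(c_{j,k}+\gamma_{j,k}\log q_2)$, and the weight $U_1+\log|z|$. The logarithmic dip of $U_1$ then does all the work:
\begin{itemize}
\item on the transition annulus, $U_1\ge c_{j,k}+\gamma_{j,k}\log q_3>\log A_{j,k}$, so the source term is integrable with room to spare;
\item on $D_{j,k}(q_1)$, $U_1\le\log A_{j,k}-\gamma_{j,k}\log(q_2/q_1)$, so the crude $L^2$ bound over all of $D_{j,k}(q_1)$ already gives $|h|+R_{j,k}|h'|\le A_{j,k}e^{-\varepsilon2^{j\rho}}$.
\end{itemize}
Your plateau $e^{c_{j,k}}$ exceeds $e^{U_1}$ on the transition annulus. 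That is why you need the extra $2A|z|^\rho$, and why $A$ must sit between $\max_k\gamma_k\log(1/q_1)/\min_{\overline{D_k}}|z|^\rho$ and $\min_k\gamma_k\log(1/q_0)/\max_{\overline{D_k}}|z|^\rho$. This balancing does close: fix $A$ from $q_0$, then let $q_1\to1$. But it closes only if the bound on $D_{j,k}(q_0)$ comes from the sub-mean-value inequality on \emph{small} disks (radius a small fixed fraction of $R_{j,k}$, or $\asymp R_{j,k}/\gamma_{j,k}$), followed by the maximum principle. Averaging over $D_{j,k}(q_1)$ itself, as the paper does, gives only the gain $\gamma_{j,k}\log(1/q_1)$, and your loss $A|z|^\rho$ exceeds that. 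What your normalization buys is $|F|\ge\frac12e^{U}$ on $\calU$. The paper's buys a proof with no second weight and no comparison of constants across $k$.

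Several repairs are needed.
\begin{itemize}
\item \textbf{Step 1 should be deleted.} Steps 2--3 use only $|G|\le e^{c_{j,k}+O(\log|z|)}$ on the transition annuli and $\log|G|\le C|z|^\rho$, and $G\equiv0$ satisfies both. Since $v$ is not small off the disks, you never obtain $\log|F|\approx U$ there anyway. Step 1 as stated is also unavailable. Nothing prevents the Hahn--Banach measure $\sigma$ from having a dense set of atoms on some $\partial H_k$. Then $U=-\infty$ on a dense subset of $\partial D_k$, and the everywhere bound $\log|G|\le U+O(\log|z|)$ forces $G\equiv0$.
\item \textbf{Restrict the cutoffs to $j\ge J$.} Otherwise $|\db\chi_{j,k}|\asymp2^{-j}$ blows up at $0$ and $F_0$ is not smooth.
\item \textbf{Show $F$ is nonconstant.} The bound $m(r,F/F')\ge cbr^\rho$ is vacuous if $F'\equiv0$. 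The paper forces $F(0)=0$ through the $\log|z|$ term in the weight, while $|F|\ge A_{j,k}/2$ on the disks. Alternatively, vary the phases $\theta_{j,k}$.
\item \textbf{Justify the uniform angular bound properly.} Use lower semicontinuity of $r\mapsto\int\mathbf{1}_{\calU}(re^{i\theta})\,d\theta$ on $[1,2]$, together with $2\calU=\calU$. Comparability of radii with distances to $0$ does not stop a circle from merely grazing a disk.
\end{itemize}
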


\begin{proof}
Let $U_1$ be the function constructed in
Lemma~\ref{lem:green-modification}.
We construct $F$ directly by a weighted $\bar\partial$ argument.

\medskip
\noindent\textbf{A weighted $\bar\partial$ estimate.}
We first give the one-variable weighted estimate that we use; it is a
special case of the $L^2$ method of H\"ormander \cite{Hor65}.
If $v\not\equiv-\infty$ is subharmonic in $\C$ and
\[
 \mathcal E_b:=\int_\C |b|^2e^{-2v}\dA<\infty,
\]
then there is a distributional solution $h$ of $\db h=b$ satisfying
\begin{equation}\label{eq:weighted-dbar}
 \int_\C \frac{|h|^2e^{-2v}}{(1+|z|^2)^2}\dA\leq\frac{\mathcal E_b}{2}.
\end{equation}
Here $b$ is a measurable scalar function, identified with the
coefficient of the $(0,1)$-form $b\,d\bar z$.

For completeness we sketch a proof. Suppose first that $v$ is smooth, and set
\[
 \Phi=2v+2\log(1+|z|^2),\qquad
 \Phi_{z\bar z}\geq\frac{2}{(1+|z|^2)^2}.
\]
Here $\Phi_z=\p\Phi$ and $\Phi_{z\bar z}=\p\db\Phi$.
Write $\langle\cdot,\cdot\rangle_\Phi$ and $\|\cdot\|_\Phi$ for the
inner product and norm in $L^2(e^{-\Phi}\dA)$, with
$\langle f,g\rangle_\Phi=\int_\C \overline f\,g\,e^{-\Phi}\dA$.
The formal adjoint of $\db$ is
$\db_\Phi^*\psi=-\p\psi+\Phi_z\psi$.
Integration by parts gives, for $\psi\in C_c^\infty(\C)$,
\[
 \|\db_\Phi^*\psi\|_\Phi^2
 =\|\db\psi\|_\Phi^2
   +\int_\C\Phi_{z\bar z}|\psi|^2e^{-\Phi}\dA.
\]
Consequently,
\begin{equation}\label{eq:smooth-dbar-estimate}
 \begin{aligned}
 |\langle b,\psi\rangle_\Phi|^2
 &\leq
 \left(\int_\C\frac{|b|^2}{\Phi_{z\bar z}}e^{-\Phi}\dA\right)
 \left(\int_\C\Phi_{z\bar z}|\psi|^2e^{-\Phi}\dA\right)\\
 &\leq\frac{\mathcal E_b}{2}\,\|\db_\Phi^*\psi\|_\Phi^2.
 \end{aligned}
\end{equation}
The Hahn--Banach theorem and the Riesz representation theorem, applied
to the functional on the range of $\db_\Phi^*$ defined by this pairing,
give $h$ with $\db h=b$ and \eqref{eq:weighted-dbar}.

For a general subharmonic \(v\), choose a decreasing family of smooth
subharmonic regularizations \(v_\epsilon\downarrow v\) as
\(\epsilon\downarrow0\).
Since $v_\epsilon\geq v$, the bound in
\eqref{eq:smooth-dbar-estimate} remains at most
$\mathcal E_b/2$, independently of $\epsilon$.
The resulting solutions $h_\epsilon$ are bounded in $L^2$ on every
compact set, since $v_\epsilon$ is locally uniformly bounded above.
A diagonal weakly convergent subsequence has a limit $h$ satisfying
$\db h=b$.
For each fixed $\delta>0$ and $\epsilon<\delta$, the same bound holds
with the weight $e^{-2v_\delta}(1+|z|^2)^{-2}$.
Weak lower semicontinuity on compact disks, followed by exhaustion of
$\C$ and then monotone convergence as $\delta\downarrow0$, proves
\eqref{eq:weighted-dbar} for $v$.

\medskip
\noindent\textbf{Construction of the entire function.}
We use the function $U_1$ constructed in
Lemma~\ref{lem:green-modification}, the radii $R_{j,k}=2^js_k$
from Section~\ref{sec:potential}, and the coefficients
$\gamma_{j,k}=2^{j\rho}\gamma_k>0$ fixed at the start of this section.
The disks have disjoint closures, so
\eqref{eq:flat-values} gives the exact formula
\begin{equation}\label{eq:local-green-weight}
 U_1(z)=c_{j,k}+\gamma_{j,k}
          \log\frac{|z-a_{j,k}|}{R_{j,k}}
 \qquad (z\in D_{j,k}).
\end{equation}
Choose
\[
 q_0<q_1<q_2<q_3<q_4<1,
\]
where $q_0$ is supplied by Lemma~\ref{lem:uniform-geometry}.
Let $\chi\in C_c^\infty(B(0,q_4))$ be radial, with
$0\leq\chi\leq1$ and $\chi=1$ on $\overline{B(0,q_3)}$.
For $j\geq0$ and $1\leq k\leq L$, put
\[
 A_{j,k}=\exp\bigl(c_{j,k}+\gamma_{j,k}\log q_2\bigr),
 \qquad
 \chi_{j,k}(z)=\chi\left(\frac{z-a_{j,k}}{R_{j,k}}\right),
\]
and define the smooth function
\[
 F_0(z)=\sum_{j=0}^{\infty}\sum_{k=1}^L
                A_{j,k}\chi_{j,k}(z).
\]
This sum is locally finite, its supports are disjoint, and $F_0$ vanishes
on a neighborhood of the origin. Set
\[
 b=\db F_0,\qquad v(z)=U_1(z)+\log|z|.
\]
The function $v$ is subharmonic on $\C$ and is not identically $-\infty$.
On the support of $\db\chi_{j,k}$ we have
\[
 U_1\geq c_{j,k}+\gamma_{j,k}\log q_3,
 \qquad |\db\chi_{j,k}|\leq C R_{j,k}^{-1}.
\]
Moreover, $|z|$ is bounded above and below by positive constant
multiples of $2^j$ on $D_{j,k}$, uniformly in $k$.
It follows that
\begin{equation}\label{eq:source-integrability}
 \begin{aligned}
 \int_\C|b|^2e^{-2v}\dA
 &\leq C\sum_{j=0}^{\infty}\sum_{k=1}^L
  2^{-2j}\exp\left(-2\gamma_{j,k}\log\frac{q_3}{q_2}\right)
 <\infty.
 \end{aligned}
\end{equation}
In view of \eqref{eq:source-integrability}, apply
\eqref{eq:weighted-dbar} and choose $h$ with $\db h=b$ and
\begin{equation}\label{eq:correction-norm}
 \mathcal E_h:=\int_\C
 \frac{|h(z)|^2e^{-2U_1(z)}}{|z|^2(1+|z|^2)^2}\dA(z)<\infty.
\end{equation}
Then $F=F_0-h$ has an entire representative, which we use from now on.
In particular, $h$ is holomorphic wherever $b=0$, including a
neighborhood of $0$ and each $D_{j,k}(q_3)$ with $j\geq0$.
Since $U_1(z)\leq C_U|z|^\rho$,
\eqref{eq:correction-norm} implies
\[
 \int_{|z|<r_0}|h(z)|^2|z|^{-2}\dA(z)<\infty
\]
for some $r_0>0$. Holomorphy at $0$ therefore forces $h(0)=0$,
and hence $F(0)=0$.

\medskip
\noindent\textbf{Exponential flatness on the smaller disks.}
On $D_{j,k}(q_1)$, formula \eqref{eq:local-green-weight} gives
$U_1\leq c_{j,k}+\gamma_{j,k}\log q_1$.
Since $|z|^2(1+|z|^2)^2\leq C2^{6j}$ there, for $j\geq0$,
\eqref{eq:correction-norm} yields
\[
 \int_{D_{j,k}(q_1)}|h|^2\dA
 \leq C\mathcal E_h\,2^{6j}
       \exp\bigl(2c_{j,k}+2\gamma_{j,k}\log q_1\bigr).
\]
The mean-value inequality and Cauchy's estimate on the concentric
disks with radii $q_0R_{j,k}$ and $q_1R_{j,k}$ now give
\begin{equation}\label{eq:local-correction}
 \begin{aligned}
 \sup_{D_{j,k}(q_0)}\bigl(|h|+R_{j,k}|h'|\bigr)
 &\leq\frac{C}{R_{j,k}}
       \left(\int_{D_{j,k}(q_1)}|h|^2\dA\right)^{1/2}\\
 &\leq C2^{2j}
       \exp\bigl(c_{j,k}+\gamma_{j,k}\log q_1\bigr).
 \end{aligned}
\end{equation}
All constants here are independent of $j$ and $k$.
Set
\[
 \varepsilon=\frac12\min_{1\leq k\leq L}
                   \gamma_k\log\frac{q_2}{q_1}>0.
\]
After division by $A_{j,k}$, the last bound in
\eqref{eq:local-correction} is at most
$C2^{2j}\exp(-2\varepsilon2^{j\rho})$.
As $\rho>0$, there is an integer $J\geq0$ such that, for all $j\geq J$,
\[
 \sup_{D_{j,k}(q_0)}\bigl(|h|+R_{j,k}|h'|\bigr)
 \leq A_{j,k}\exp(-\varepsilon2^{j\rho}),
 \qquad 1\leq k\leq L.
\]
Increase $J$ so that also $R_{j,k}\geq2$ and
$\exp(-\varepsilon2^{j\rho})\leq1/2$ for all these indices.
On $D_{j,k}(q_0)$ we have $F_0=A_{j,k}$, and consequently
\[
 |F|\geq\frac{A_{j,k}}2,
 \qquad
 |F'|\leq\frac{A_{j,k}}{R_{j,k}}
                    \exp(-\varepsilon2^{j\rho}).
\]
In particular, $F$ is not identically zero and, since $F(0)=0$, it is
nonconstant. We obtain
\begin{equation}\label{eq:flat-logarithmic-ratio}
 \log\left|\frac{F(z)}{F'(z)}\right|
 \geq\varepsilon2^{j\rho}
 \qquad
 (z\in D_{j,k}(q_0),\ j\geq J).
\end{equation}
At a zero of $F'$ the left-hand side is interpreted as $+\infty$.

\medskip
\noindent\textbf{Global growth and a uniform angular bound.}
By \eqref{eq:U-upper}, \eqref{eq:flat-values}, and the disjointness of
the cutoff supports,
\[
 |F_0(z)|\leq\exp(C_U|z|^\rho).
\]
Also, \eqref{eq:correction-norm} and $U_1\leq C_U|z|^\rho$ imply,
for $|z|\geq2$,
\[
 \int_{B(z,1)}|h|^2\dA
 \leq C\mathcal E_h(1+|z|)^6\exp\bigl(2C_U(|z|+1)^\rho\bigr).
\]
Applying the submean inequality to the entire function $F=F_0-h$
and using $|F_0-h|^2\leq2|F_0|^2+2|h|^2$, we conclude that
\begin{equation}\label{eq:direct-upper-growth}
 \log M(r,F)\leq A r^\rho
\end{equation}
for some $A>0$ and every sufficiently large $r$.

For $r>0$, define the angular measure
\[
 \eta(r)=\int_{-\pi}^{\pi}
             \mathbf{1}_{\calU}(re^{i\theta})\,d\theta,
\]
where $\mathbf{1}_{\calU}$ is the indicator function of $\calU$.
Every circle meets the open set $\calU$, so $\eta(r)>0$.
Only finitely many of the disks in \eqref{eq:radial-disk-cover} meet
the compact annulus $1\leq|z|\leq2$.
For each such disk, its boundary meets any circle centered at $0$ in
at most two points, since its center is nonzero.
Dominated convergence therefore proves that $\eta$ is continuous on
$[1,2]$. Hence
\begin{equation}\label{eq:uniform-angular-measure}
 \eta_*:=\min_{1\leq r\leq2}\eta(r)>0,
 \qquad \eta(r)\geq\eta_*\quad(r>0),
\end{equation}
where the second assertion follows from $2\calU=\calU$.
Put
\[
 R_*:=\max_{1\leq k\leq L}(|a_k|+q_0s_k).
\]
If $|z|=r$ and $z\in D_{j,k}(q_0)$, then $r\leq2^jR_*$.
For sufficiently large $r$, all disks meeting $|z|=r$ have $j\geq J$.
Combining \eqref{eq:flat-logarithmic-ratio} and
\eqref{eq:uniform-angular-measure} gives
\begin{equation}\label{eq:direct-proximity-lower}
 m\!\left(r,\frac{F}{F'}\right)
 \geq\frac{\varepsilon\eta_*}{2\pi R_*^\rho}r^\rho
 =:Br^\rho
\end{equation}
for every sufficiently large $r$.

\medskip
\noindent\textbf{Two-sided characteristic growth and deficiency.}
For an entire function, the Poisson estimate for
$\log^+|F|$ gives $\log M(s,F)\leq3T(2s,F)$ for large $s$.
Thus Lemma~\ref{lem:characteristic} and
\eqref{eq:direct-proximity-lower} imply
\[
 \begin{aligned}
 Br^\rho
 &\leq m\!\left(r,\frac{F}{F'}\right)
 \leq T\!\left(r,\frac{F'}F\right)+O(1)\\
 &\leq2\log M(2r,F)+O(\log r)
 \leq6T(4r,F)+O(\log r).
 \end{aligned}
\]
Since $\log r=o(r^\rho)$, replacing $r$ by $r/4$ proves the lower
bound in \eqref{1}. The upper bound follows from
$T(r,F)\leq\log^+M(r,F)$ and \eqref{eq:direct-upper-growth}.
In particular, $F$ is transcendental and has order and lower order
$\rho$.
Using \eqref{eq:direct-upper-growth} once more in
Lemma~\ref{lem:characteristic}, we obtain a constant $C>0$ such that
\[
 T\!\left(r,\frac{F'}F\right)\leq Cr^\rho
\]
for all sufficiently large $r$. Consequently,
\[
 \delta\!\left(0,\frac{F'}F\right)
 =\liminf_{r\to\infty}
   \frac{m(r,F/F')}{T(r,F'/F)}
 \geq\frac BC>0,
\]
which proves \eqref{2}.
Finally, \eqref{1} and $T(r,F)\leq\log^+M(r,F)$ give the lower
bound for $\log M(r,F)$ in \eqref{eq:main-growth}; its other two
bounds are \eqref{eq:direct-upper-growth} and
\eqref{eq:direct-proximity-lower}.
This completes the proof of Theorem~\ref{thm:main}.
\end{proof}

\end{document}